\theoremstyle{plain}
\newtheorem*{rep@theorem}{\rep@title}
\newcommand{\newreptheorem}[2]{%
\newenvironment{rep#1}[1]{%
 \def\rep@title{#2~\ref{##1}}%
 \begin{rep@theorem}}%
 {\end{rep@theorem}}}
\newtheorem{theorem}{Theorem}[section]
\newtheorem{definition}[theorem]{Definition}
\newtheorem{lemma}[theorem]{Lemma}
\newtheorem{proposition}[theorem]{Proposition}
\theoremstyle{remark}
\newtheorem{remark}[theorem]{Remark}
\numberwithin{equation}{section}
\def\XXint#1#2#3{{\setbox0=\hbox{$#1{#2#3}{\int}$ }
\vcenter{\hbox{$#2#3$ }}\kern-.6\wd0}}
\newcommand{\bfc}{{\bf c}}
\newcommand{\bfw}{{\bf w}}
\newcommand{\bfz}{{\bf z}}
\newcommand{\bbB}{\mathbb B}
\newcommand{\bbI}{\mathbb I}
\newcommand{\bbN}{\mathbb N}
\newcommand{\bbS}{\mathbb S}
\newcommand{\calE}{\mathcal E}
\newcommand{\calI}{\mathcal I}
\newcommand{\calJ}{\mathcal J}
\newcommand{\calS}{\mathcal S}
\newcommand{\bp}{\begin{pmatrix}}
\newcommand{\ep}{\end{pmatrix}}
\newcommand{\p}{\partial}
\newcommand{\N}{\mathbb{N}}
\newcommand{\R}{\mathbb{R}}
\newcommand{\T}{\mathbb{T}}
\newcommand{\ve}{\varepsilon}
\newcommand{\dx}{\textnormal{d}x}
\newcommand{\dv}{\textnormal{d}v}
\newcommand{\dt}{\textnormal{d}t}
\newcommand{\ds}{\textnormal{d}s}
\newcommand{\di}{\textnormal{d}I}
\newcommand{\To}{\longrightarrow}
\newcommand{\weakto}{\rightharpoonup}
\newcommand{\lt}{\left}
\newcommand{\rt}{\right}
\newcommand{\intr}{\int_{\R^n}}
\newcommand{\pa}{\partial}
\newcommand{\ov}{\overline}
\date{June 2024}
\begin{document}

\title[Solutions to BGK Model for Barotropic Gas Dynamics]{Global Mild Solutions to a BGK Model for Barotropic Gas Dynamics}

\author{Dowan Koo}
\address{Department of Mathematics, Yonsei University, Seoul 03722, Republic of Korea}
\email{dowan.koo@yonsei.ac.kr}

\author{Sihyun Song}
\address{Department of Mathematics, Yonsei University, Seoul 03722, Republic of Korea}
\email{ssong@yonsei.ac.kr}

\date{\today}
\keywords{Global mild solutions, BGK-type model, barotropic Euler equations, kinetic entropy inequality, hydrodynamic limit, velocity averaging}

\begin{abstract}
We establish global existence of mild solutions to the BGK model proposed by Bouchut [J. Stat. Phys., {\bf 95}, (1999), 113–170] under the minimal assumption of finite kinetic entropy initial data. Moreover we rigorously derive a kinetic entropy inequality, which combined with the theory developed by Berthelin and Vasseur [SIAM J. Math. Anal., {\bf 36}, (2005), 1807--1835] leads to the hydrodynamic limit to the barotropic Euler equations. The main tools employed in the analysis are stability estimates for the Maxwellian and a velocity averaging lemma.
\end{abstract}

\maketitle
\setcounter{tocdepth}{1}
\tableofcontents

\section{Introduction}
\subsection{System and Notions}
In this paper we study a BGK-type model, introduced and referred to in \cite[Section 3.1.2]{bouchut1999} as the first model for barotropic gas dynamics:
\begin{equation}\label{BGK}
\begin{cases}
    \displaystyle \p_t f + v \cdot \nabla_x f = \frac{1}{\tau} (M[f] - f),\\
    f(0,\cdot,\cdot)=f_0(\cdot,\cdot).
    \end{cases}
    \end{equation}
Here $f(t,x,v)$ is the distribution function expressing the number of gas particles in the infinitesimal cube of volume $\dx\dv$ located at $(x,v)$ of the phase space $\R^n\times \R^n$ (at time $t\ge 0$). The parameter $\tau>0$ is the relaxation time which we interpret here as the microscopic scale.

For each $\gamma\in\left(1,\frac{n+2}{n}\right]$ and $\kappa\ge 0$, %{\color{blue} whose interpretations we explain in Section \ref{subs: hyd}},
the Maxwellian $M$ is first defined for a vector $(\rho,u)\in \R_+\times \R^n$ by
\begin{align}
\label{def : M}
  M[\rho,u](v) := \begin{cases}
        \displaystyle c_2\left(c_1\rho^{\gamma-1} - |v-u|^2\right)_+^{d/2} &  \displaystyle \gamma \in \left(1,\frac{n+2}{n}\right), \\[9pt]
        \displaystyle c_2 \mathbf{1}_{c_1\rho^{2/n} \ge|v-u|^2} &  \displaystyle\gamma =\frac{n+2}{n}.
    \end{cases}  
\end{align}
Here,
\begin{align*}
    d := \frac{2}{\gamma-1}-n
\end{align*}
denotes the degree of freedom, and the constants are given by
\begin{align*}
    c_1 = \frac{2\gamma\kappa}{\gamma-1},\quad  c_2 = \left(\frac{2\gamma\kappa}{\gamma-1}\right)^{-\frac{1}{\gamma-1}} \frac{\Gamma\left(\frac{\gamma}{\gamma-1}\right)}{\pi^{n/2}\Gamma\left(\frac{d}{2}+1\right)}.
\end{align*}
Then, for $f\ge0$ with $f\in L^1(\R^n;(1+|v|)\dv)$, the Maxwellian corresponding to $f$ is defined as
\begin{align*}
    M[f] := M[\rho_f,u_f],
\end{align*}
where $\rho_f$ and $u_f$ are the macroscopic density and bulk velocity associated to $f$, respectively:
\begin{align*}
    \rho_{f} := \int_{\R^n} f(v) \dv, \qquad 
    u_{f} := \begin{cases} 
    \displaystyle\frac{1}{\rho_f}\int_{\R^n} v f(v) \dv &\text{if } \rho_f \ne 0, \\[9pt]
    0 &\text{if }\rho_f = 0.
    \end{cases}
\end{align*}
We recall some basic properties of the model \eqref{BGK}. It has an associated kinetic entropy that is written
\begin{equation}\label{eq:kin:ent}
    H(f,v) := \begin{cases}
      \displaystyle\frac{|v|^2}{2}f + \frac{1}{2c_2^{2/d}} \frac{f^{1+2/d}}{1+2/d} & \displaystyle\gamma \in \left(1,\frac{n+2}{n}\right),\\[9pt]
    \displaystyle\frac{|v|^2}{2}f + \infty \cdot \mathbf{1}_{f>c_2} & \displaystyle\gamma =\frac{n+2}{n}.\\
    \end{cases}
\end{equation}
The Maxwellian is constructed to satisfy the following properties, we refer to \cite{bouchut1999,berthelinbouchut2000,choihwang2024} for explicit proofs and calculations. For any $f\ge 0$ with $f\in L^1(\R^n;(1+|v|^2)\dv)$, the Maxwellian verifies
\begin{equation*}
\begin{cases}
    \displaystyle\int_{\R^n} (1,v)M[f] \dv = (\rho_f, \rho_f u_f),\\[9pt]
    \displaystyle\int_{\R^n} v \otimes v M[f]  \, \dv = \rho_f u_f \otimes u_f + \kappa \rho_f^\gamma \bbI.
\end{cases}
\end{equation*}
Most importantly for any $f\ge 0$ such that $f+ H(f,v)\in L^1(\R^n)$ the following minimization principle
\begin{equation}\label{eq : min principle}
\int_{\R^n} H(f,v)\dv \ge \int_{\R^n} H(M[f],v)\dv
\end{equation}
and compatibility condition hold:
\begin{equation}
\label{eq: comp cond}
    \int_{\R^n} H(M[f],v)\dv = \frac{1}{2}\rho_f |u_f|^2 + \frac{\kappa}{\gamma-1} \rho_f^{\gamma}.
\end{equation}
\begin{remark}
    For the endpoint case $\gamma = \frac{n+2}{n}$ the infinity term in \eqref{eq:kin:ent} is necessary for the minimization principle \eqref{eq : min principle} to hold for all $f$. Indeed, we provide in Appendix \ref{rem:countex} a counterexample to the minimization principle if it is absent. To bypass this issue the initial data $f_0$ should be assumed to satisfy $f_0 \le c_2$, then this bound propagates with time.
\end{remark}
\begin{remark}
\label{rem : max const factor}
The Maxwellian for the end-point case $\gamma = \frac{n+2}{n}$ was written in \cite{berthelinvasseur2005, choihwang2024} as
\begin{align}
\label{eq : calM}
    \mathcal{M}[f] := \mathbf{1}_{\left\{ \kappa n\rho_f^{2/n}/|\bbS_n| \ge |u_f - v|^2 \right\}}.
\end{align}
In this way
\begin{equation*}
    \int_{\R^n} H(\mathcal{M}[f],v) \dv = \frac{1}{2}\rho_f |u_f|^2 + \frac{\kappa n C_n}{2}\rho_f^{\frac{n+2}{n}}
\end{equation*}
for a constant $C_n\ne 1$ so one needs to modify \eqref{eq: comp cond} for $\gamma=\frac{n+2}{n}$. In this work, we tweaked the constants arising in the Maxwellian (compare \eqref{eq : calM} with our definition \eqref{def : M}) so that \eqref{eq: comp cond} holds consistently for all $\gamma\in\left(1,\frac{n+2}{n}\right]$.
\end{remark}

Finally we mention that for a solution $f\ge 0$ to \eqref{BGK} a formal calculation leads to the \emph{kinetic entropy inequality}
\begin{equation}
    \label{eq : kin entrop ineq}
    \begin{split}
          &\iint_{\R^{2n}} H(f,v)\dx \dv + \frac{1}{\tau}\int_0^t \iint_{\R^{2n}} H(f,v)-H(M^{}[f],v) \dx \dv \ds\\
          &\qquad \le \iint_{\R^{2n}}H(f_0,v)\dx\dv.
        \end{split}
\end{equation}
For initial data $f_0$ with finite kinetic entropy this implies
    \begin{align}
    \label{ineq : dissipation est}
        \int_0^t \iint_{\R^{2n}} H(f,v)-H(M[f],v) \dx \dv \ds \le \tau \iint_{\R^{2n}} H(f_0,v) \dx \dv = C\tau.
    \end{align}
This estimate is crucially used in deriving the hydrodynamic limit of \eqref{BGK} as we explain in Section \ref{subs: hyd}.

\subsection{History and Review}
The BGK model was proposed independently in \cite{bhatnagargrosskrook1954} and \cite{walender1954} as a relaxation time approximation of the Boltzmann equation \cite{boltzmann1995}. It describes relaxation towards equilibrium with a simpler collision operator yet obeys certain conservation laws that the Boltzmann equation is founded upon. Recently, interest has been given to its applications to gas mixtures \cite{andriesaokiperthame2002, baeklingenbergpirneryun2021, bobylevbisigroppispigapotapenko2018, brullpavanschneider2012, klingenbergpirnerpuppo2017}, relativistic particles \cite{andersonwitting1974, pennisiruggeri2018}, reactive gas molecules \cite{kimleeyun2021}, and so on.

For our model of interest \eqref{BGK} we mention the following works. Its construction can be found in Bouchut \cite{bouchut1999} for the specific study of barotropic gas dynamics. The hydrodynamic limit of \eqref{BGK} to the barotropic Euler equations was discussed in \cite{berthelinbouchut2002,berthelinvasseur2005}. Existence of solutions for the monodimensional case was studied in \cite{berthelinbouchut2000}. More recently \cite{choihwang2024} proved existence of weak solutions to \eqref{BGK} in dimensions $n\ge 2$ for the range $\gamma \in \left(1,\frac{n+4}{n+2}\right]\cup \left\{\frac{n+2}{n}\right\}$. Meanwhile, construction of unique classical solutions to \eqref{BGK} near a global Maxwellian was obtained in \cite{hwang2024}, in a more restricted range of $\gamma$. Beyond these developments the existence of solutions in the full range $\gamma\in\left(1,\frac{n+2}{n}\right]$ is yet to be established.

\subsection{Hydrodynamic Limit of \eqref{BGK} to the Barotropic Euler Equations} \label{subs: hyd}
Berthelin and Vasseur \cite{berthelinvasseur2005} studied the hydrodynamic limit of \eqref{BGK} to the compressible Euler equations
\begin{equation}
\label{eq:BE}
    \begin{cases}
        \p_t \rho + \nabla\cdot (\rho u) = 0,\\
        \p_t (\rho u) + \nabla \cdot (\rho u \otimes u + \kappa \rho^\gamma \bbI) = 0,
    \end{cases}
\end{equation}
where $\gamma$ is the heat capacity ratio, and $\kappa$ is the proportionality constant for the adiabatic relations of an ideal gas. They deduced the following result, which links \eqref{BGK} to \eqref{eq:BE}.
%\cite[Theorem 1.1]{berthelinvasseur2005}.} %The following result linking \eqref{BGK} to \eqref{eq:BE} was deduced:
\begin{theorem}
\label{thm : hyd}
\cite[Theorem 1.1]{berthelinvasseur2005}. Let $\gamma \in (1,\frac{n+2}{n}]$ and $(\rho_0,\rho_0 u_0) \in L^1(\R^n)$ be the given initial data for the solution $(\rho, \rho u)\in C^1([0,T)\times\R^n) \cap L^1([0,T)\times \R^n)$ to \eqref{eq:BE} satisfying $\rho>0$; and $\rho,u,\nabla_x u, \nabla_x \rho$ are bounded; $\rho |u|^2,\rho^\gamma$ are integrable with respect to $(t,x)$. Consider a family of initial values $\{f^\tau_0\}_{\tau>0}$ with $f^\tau_0 + H(f^\tau_0,v)\in L^1(\R^{2n})$. Let $\{f^\tau\}_{\tau>0}$ denote corresponding solutions to \eqref{BGK} satisfying the kinetic entropy inequality \eqref{eq : kin entrop ineq}, with initial datum $f_0^\tau$. If the initial datum are \textit{well-prepared}
    \begin{align*}
        \int_{\R^n} (f_0^\tau, vf_0^\tau, H(f_0^\tau,v)) \dv \overset{\tau\to 0}{\To} \Big(\rho_0, \rho_0 u_0, \frac{1}{2}\rho_0 |u_0|^2 + \frac{\kappa}{\gamma-1}\rho_0^\gamma \Big) \quad \text{in } L^1(\R^n),
    \end{align*}
    then
    \begin{align*}
        \begin{cases}
            \rho_{f^\tau} \to \rho &\text{strongly in }C^0(0,T;L^p_{\rm loc}(\R^n)) \quad p\in \left[1,\gamma\right),\\[6pt]
            \rho_{f^\tau} u_{f^\tau} \to \rho u &\text{strongly in }C^0(0,T;L^q_{\rm loc}(\R^n)) \quad \displaystyle q\in \left[1,\frac{2\gamma}{\gamma+1}\right).\\
            %\displaystyle \int_{\R^n} v\otimes v f^\tau \dv \to \rho u \otimes u + \kappa \rho^\gamma \mathbb{I} &\text{strongly in }L^1([0,T]\times \R^n).
        \end{cases}
    \end{align*}
\end{theorem}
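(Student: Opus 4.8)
The plan is to prove this via the \emph{relative entropy} (modulated energy) method, comparing the kinetic solution $f^\tau$ with the local Maxwellian $\bar M := M[\rho,u]$ attached to the smooth Euler solution. The starting point is the observation, coming from the Lagrange multiplier structure behind the minimization principle \eqref{eq : min principle}, that for $\gamma\in\left(1,\frac{n+2}{n}\right)$ the derivative $\p_f H(\bar M,v)$ agrees on $\mathrm{supp}\,\bar M$ with an affine function $\Lambda(t,x)+u(t,x)\cdot v$ (explicitly $\Lambda = \tfrac{c_1}{2}\rho^{\gamma-1}-\tfrac12|u|^2$), while $\Lambda + u\cdot v \le \p_f H(f,v)$ for every $f\ge 0$ off that support. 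Thus
\[
\mathcal E^\tau(t) := \iint_{\R^{2n}} \Big[ H(f^\tau,v) - H(\bar M,v) - (\Lambda + u\cdot v)(f^\tau - \bar M) \Big]\dx\dv \;\ge\; 0
\]
is a bona fide relative entropy (nonnegativity by convexity of $H(\cdot,v)$), and, using $\int(1,v)\bar M\,\dv = (\rho,\rho u)$ together with the compatibility condition \eqref{eq: comp cond}, it rewrites as $\iint H(f^\tau,v)\dx\dv - \int \eta(\rho,\rho u)\dx - \int[\Lambda(\rho_{f^\tau}-\rho)+u\cdot(\rho_{f^\tau}u_{f^\tau}-\rho u)]\dx$ with $\eta=\tfrac12\rho|u|^2+\tfrac{\kappa}{\gamma-1}\rho^\gamma$. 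Up to constants depending on $\|\rho\|_\infty,\ \inf\rho,\ \|u\|_\infty$, the quantity $\mathcal E^\tau(t)$ dominates the macroscopic relative entropy $\int_{\R^n}\eta\big(\rho_{f^\tau},\rho_{f^\tau}u_{f^\tau}\,\big|\,\rho,\rho u\big)\dx$.

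Next I would differentiate $\mathcal E^\tau$ in time, inserting \eqref{BGK} for $\p_t f^\tau$ and \eqref{eq:BE} for the time derivatives of $\rho,\rho u,\Lambda,u,\bar M$. Four features keep the computation closed: (i) the streaming term $v\cdot\nabla_x f^\tau$ integrates to zero against $\p_f H(f^\tau,v)=\partial_f H$ and contributes only $-\int[\nabla_x\Lambda\cdot\rho_{f^\tau}u_{f^\tau}+\nabla_x u : P_{f^\tau}]\dx$ against $\Lambda+u\cdot v$, where $P_{f^\tau}:=\int v\otimes v\,f^\tau\,\dv$; (ii) the relaxation term contributes $\tfrac1\tau\iint \p_f H(f^\tau,v)(M[f^\tau]-f^\tau)\le\tfrac1\tau\iint[H(M[f^\tau],v)-H(f^\tau,v)]\le 0$ by \eqref{eq : min principle}, and its contribution against the affine test function vanishes since $M[f^\tau]$ and $f^\tau$ share $(1,v)$-moments; (iii) along smooth solutions $\eta$ obeys an exact conservation law, so the contribution of $\p_t\!\int\eta$ is zero; (iv) splitting $P_{f^\tau}=\rho_{f^\tau}u_{f^\tau}\otimes u_{f^\tau}+\kappa\rho_{f^\tau}^\gamma\bbI + D^\tau$ with the \emph{flux defect} $D^\tau:=P_{f^\tau}-P_{M[f^\tau]}$, all remaining terms organize into a quadratic form in $(\rho_{f^\tau}-\rho,\ \rho_{f^\tau}u_{f^\tau}-\rho u)$ plus $\int|\nabla_x u|\,|D^\tau|$. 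The upshot is a differential inequality $\tfrac{d}{dt}\mathcal E^\tau(t)\le C\,\mathcal E^\tau(t)+C\int_{\R^n}|\nabla_x u|\,|D^\tau|\,\dx$ with $C$ depending on the stated bounds for $\rho,u,\nabla_x\rho,\nabla_x u$.

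The decisive step is controlling the flux defect. A quantitative stability estimate for the Maxwellian — of the type $|D^\tau(t,x)|\lesssim w(t,x)\big(\int_{\R^n}[H(f^\tau,v)-H(M[f^\tau],v)]\dv\big)^{1/2}$ after a suitable velocity truncation, with $w$ locally integrable thanks to the uniform mass and kinetic-entropy bounds — combined with the dissipation estimate \eqref{ineq : dissipation est} gives $\int_0^t\!\int_{\R^n}|\nabla_x u|\,|D^\tau|\,\dx\,\ds \lesssim \big(\tau\iint_{\R^{2n}} H(f_0^\tau,v)\big)^{1/2}=O(\tau^{1/2})$. Grönwall's inequality, together with the well-preparedness hypothesis $\mathcal E^\tau(0)\to 0$, then forces $\sup_{t\in[0,T)}\mathcal E^\tau(t)\to 0$ as $\tau\to 0$, hence the macroscopic relative entropy tends to $0$ uniformly in $t$.

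It remains to convert this into the stated strong convergences. Since $\rho>0$ and $\rho,u$ are bounded, the relative entropy $\eta(\rho_{f^\tau},\rho_{f^\tau}u_{f^\tau}\,|\,\rho,\rho u)$ controls $\rho_{f^\tau}|u_{f^\tau}-u|^2$ pointwise and, by strict convexity of $r\mapsto r^\gamma$, an $L^1$ quantity comparable to $|\rho_{f^\tau}-\rho|^2$ where $\rho_{f^\tau}$ stays bounded and to $\rho_{f^\tau}^\gamma$ where $\rho_{f^\tau}$ is large; interpolating against the uniform $L^1$ and entropy bounds and localizing yields $\rho_{f^\tau}\to\rho$ in $C^0(0,T;L^p_{\mathrm{loc}}(\R^n))$ for every $p<\gamma$. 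For the momentum one writes $\rho_{f^\tau}u_{f^\tau}-\rho u=\sqrt{\rho_{f^\tau}}\,(\sqrt{\rho_{f^\tau}}\,u_{f^\tau}-\sqrt{\rho_{f^\tau}}\,u)+(\rho_{f^\tau}-\rho)u$ and applies Hölder with exponents satisfying $\tfrac1q=\tfrac1{2\gamma}+\tfrac12$, giving convergence in $C^0(0,T;L^q_{\mathrm{loc}}(\R^n))$ for $q<\tfrac{2\gamma}{\gamma+1}$. I expect the main obstacle to be the flux-defect bound of the third step: it rests on sharp stability estimates for the compactly supported, non-Gaussian Maxwellian \eqref{def : M}, is delicate near the vacuum set of $\rho_{f^\tau}$ and for large velocities, and at the endpoint $\gamma=\tfrac{n+2}{n}$ must be carried out using the propagated bound $f^\tau\le c_2$ dictated by the $\infty$-term in \eqref{eq:kin:ent}.
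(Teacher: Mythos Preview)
Your proposal is correct and follows essentially the same route as the paper (and the underlying Berthelin--Vasseur argument it cites): the relative entropy method yields a Gr\"onwall inequality for $\mathcal E^\tau$ with an error term coming from the flux defect $D^\tau = \int_{\R^n} v\otimes v\,(f^\tau - M[f^\tau])\,\dv$, and the crux is bounding this defect by the dissipation. The paper does not reproduce the full modulated-energy computation but singles out exactly the step you flag as the main obstacle, supplying it as Lemma~\ref{lem:dis}: $\int_{\R^n}|v|^2|f-M[f]|\,\dv \le C\big(\rho_f^{\gamma/2}\sqrt{D_f}+D_f\big)$, which together with \eqref{ineq : dissipation est} and Cauchy--Schwarz in $(t,x)$ gives the $O(\sqrt{\tau})$ control you anticipated.
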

Let us briefly explain how the kinetic entropy inequality \eqref{eq : kin entrop ineq} is employed in \cite{berthelinvasseur2005} to yield the hydrodynamic limit of \eqref{BGK}. From the study of the BGK structure in \cite[Section 4.1.1]{berthelinvasseur2005} we note that \cite[Theorem 1.2]{berthelinvasseur2005} immediately applies to \eqref{BGK} provided that it is possible to obtain the decay estimate
\begin{equation}\label{eq:J}
        \int_0^t\iint_{\R^{2n}} |v|^2 |f- M[f]|\dv \dx \ds = O(\sqrt{\tau}) \quad \text{as} \quad \tau \to 0.
\end{equation}
For the end-point case $\gamma = \frac{n+2}{n}$ a novel idea was suggested in \cite[Proposition 4.1]{berthelinvasseur2005} to obtain the estimate
\begin{equation}
\label{eq : imp term}
\begin{split}
    &\int_{\R^n} |v|^2 |f - M[f]| \dv \lesssim (\rho_f)^{\frac{n+2}{2n}}\sqrt{D_f} + D_f,\\
    &D_f := \int_{\R^n} H(f,v) - H(M[f],v) \, \dv.
    \end{split}
\end{equation}
This estimate combined with the consequence \eqref{ineq : dissipation est} of the kinetic entropy inequality yielded \eqref{eq:J}.

For the intermediate range $\gamma \in \left(1,\frac{n+2}{n}\right)$ an analogue of \eqref{eq : imp term} (see Lemma \ref{lem b3} for the precise statement) was obtained in \cite[Section 4.1.2]{berthelinvasseur2005} for another model, namely the \emph{second kinetic model for barotropic gas dynamics} \cite[Section 3.1.3]{bouchut1999}:
\begin{align}
\label{eq : ext mod}
    \displaystyle\p_t g + v \cdot \nabla_x g = \frac{1}{\tau}(\overline{M}[g] - g).
\end{align}
Here $g = g(t,x,v,I):\R_+\times \R^n\times\R^n\times \R_+ \to [0,c_3]$ and the second Maxwellian $\overline{M}$ is defined for $(\rho,u)\in \R_+\times \R^n$ by
\begin{align}
\label{def : M bar}
    \overline{M}[\rho,u](v,I) := c_3 \mathbf{1}_{|v-u|^2 + I^2 < c_1 \rho^{\gamma-1}} \quad (v,I) \in \R^n \times [0,\infty)
\end{align}
where
\begin{align*}
    c_0=\frac{2\pi^{d/2}}{\Gamma(d/2)} \quad \text{and} \quad c_3 = \left(\frac{2\pi\gamma\kappa}{\gamma-1}\right)^{-\frac{1}{\gamma-1}} \Gamma\lt(\frac{\gamma}{\gamma-1}\rt).
\end{align*}
Then $\overline{M}[g] := \overline{M}[\rho_{g}, u_{g}]$ where
\begin{align*}
(\rho_{g},\rho_{g}u_{g}) := \displaystyle \int_{\R^{n}} \int_0^\infty (1,v)g(t,x,v,I)c_0 I^{d-1} \textnormal{d}I \dv.
\end{align*}
The first model \eqref{BGK} is recovered by integrating the second model \eqref{eq : ext mod} against $c_0 I^{d-1} \textnormal{d}I$.

The authors could not locate in \cite{berthelinvasseur2005} an analogue of \eqref{eq : imp term} for the first model in the range $\gamma\in \left(1,\frac{n+2}{n}\right)$. To complement the proof of \cite[Theorem 1.1]{berthelinvasseur2005}, we provide the following lemma.

\begin{lemma}
\label{lem:dis}
For $\gamma \in \lt(1,\frac{n+2}{n}\rt)$ and $0 \le f\in L^1(\R^n;(1+|v|^2)\dv)$, there is a constant $C=C_{n,\gamma}>0$ such that
\[
\intr |v|^2 |f-M[f]| \dv \le C \lt\{(\rho_{f})^{\frac{\gamma}{2}}\sqrt{D_f} + D_f \rt\},
\]
where
\[
D_f := \intr \lt(H(f,v) - H(M[f],v)\rt)\dv.
\]
\end{lemma}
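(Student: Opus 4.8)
The plan is to split the integrand $|v|^2|f - M[f]|$ into two regions according to whether $v$ is "microscopically fast" relative to the bulk velocity $u_f$, and to estimate the two contributions by the entropy dissipation $D_f$ using the explicit form \eqref{eq:kin:ent} of $H$. First I would reduce to the case $\rho_f \ne 0$ (otherwise $f = 0$ a.e. and both sides vanish) and, by the translation structure of $M[f]$ in $v$, it is convenient to change variables $v \mapsto v + u_f$; the weight $|v|^2$ becomes $|v + u_f|^2 \le 2|v|^2 + 2|u_f|^2$, and $|u_f|^2$ contributions will be controlled using $\int (1,v) M[f]\,dv = (\rho_f, \rho_f u_f)$ together with the fact that $f$ and $M[f]$ have the same first two moments, so that $\int v(f - M[f])\,dv = 0$ (this is exactly the conservation property recalled before \eqref{eq : min principle}).

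Next I would use convexity of $H$ in $f$. Writing $h(f) := \frac{1}{2c_2^{2/d}}\frac{f^{1+2/d}}{1+2/d}$ for the non-kinetic part of $H$, so that $H(f,v) = \frac{|v|^2}{2}f + h(f)$, the minimization identity gives $D_f = \int \big(\frac{|v|^2}{2}(f - M[f]) + h(f) - h(M[f])\big)\,dv = \int \big(h(f) - h(M[f])\big)\,dv$ after using moment conservation to kill the $|v|^2$ term. Since $h$ is strictly convex with $h'(M[f]) = \frac{1}{2c_2^{2/d}}(M[f])^{2/d} = \frac{c_1\rho_f^{\gamma-1} - |v - u_f|^2}{2}$ on the support of $M[f]$ (this is precisely how the Maxwellian is built), I get $D_f = \int \big(h(f) - h(M[f]) - h'(M[f])(f - M[f])\big)\,dv + \frac{1}{2}\int\big(c_1\rho_f^{\gamma-1} - |v-u_f|^2\big)(f - M[f])\,dv$; the first integral is a nonnegative Bregman-type term, and the second equals $\frac{1}{2}\int(|v - u_f|^2 - c_1\rho_f^{\gamma-1})(M[f] - f)\,dv$, which after shifting $v$ and using moment conservation reduces to $\frac{1}{2}\int |v|^2 (M[f] - f)\,dv$ up to a harmless constant times $\rho_f^{\gamma-1}\int(M[f]-f)=0$. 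So morally $D_f$ already controls a signed version of the quantity we want; the real work is to convert the signed estimate into the absolute one $\int |v|^2|f - M[f]|\,dv$.

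For that, after the shift, split $\R^n = A \cup A^c$ where $A := \{|v|^2 < c_1 \rho_f^{\gamma-1}\}$ is the support of the shifted Maxwellian. On $A^c$ one has $M[f] = 0$, so $\int_{A^c}|v|^2|f - M[f]|\,dv = \int_{A^c}|v|^2 f\,dv \le \int_{A^c} H(f,v)\,dv \le D_f$ after noting $H(M[f],v) = 0$ there; more care is needed because $D_f$ is a global integral, but the Bregman term is pointwise nonnegative so restricting the integral is legitimate. On $A$, where both $f$ and $M[f]$ are comparable in scale, I would use the elementary inequality $|a - b| \lesssim \sqrt{b}\,\sqrt{(a^{1+2/d} - b^{1+2/d}) - (1+2/d)b^{2/d}(a-b)}\, + (\text{higher order})$ valid for $a, b \ge 0$ — i.e. that the Bregman distance of the power $1+2/d$ controls $(a-b)^2/\max(a,b)^{1-2/d}$ up to constants — combined with Cauchy–Schwarz in $v$ over the bounded set $A$: $\int_A |v|^2|f - M[f]|\,dv \le \big(\int_A |v|^4 (M[f] + \text{const})\,dv\big)^{1/2}\big(\int_A (\text{Bregman integrand})\,dv\big)^{1/2} + \cdots$. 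The first factor is a pure Maxwellian moment, explicitly computable as a constant times $\rho_f^{\gamma}$ (since $|v|^2 \lesssim \rho_f^{\gamma-1}$ on $A$ and $\int M[f]\,dv = \rho_f$ give $\int_A |v|^4 M[f]\,dv \lesssim \rho_f^{2(\gamma-1)}\cdot \rho_f = \rho_f^{2\gamma-1}$, hence the square root is $\lesssim \rho_f^{\gamma - 1/2} = \rho_f^{\gamma/2}\cdot \rho_f^{(\gamma-1)/2}$ — one must check this matches $\rho_f^{\gamma/2}$, which it does once the const is chosen as a multiple of $M[f]$ itself rather than an additive constant). The second factor is $\sqrt{D_f}$. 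This yields the $\rho_f^{\gamma/2}\sqrt{D_f}$ term, and the leftover cross terms and the $A^c$ piece give the $+D_f$ term.

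The main obstacle I anticipate is the bookkeeping in the region $A$: making the pointwise convexity inequality for the power function $t \mapsto t^{1+2/d}$ quantitative with the right homogeneity (note $2/d$ can be any positive number as $\gamma$ ranges over $(1, \frac{n+2}{n})$, so one cannot assume $1 + 2/d \le 2$ or $\ge 2$ and must handle the convexity estimate uniformly), and then tracking the power of $\rho_f$ so that the Maxwellian moment $\int_A |v|^4 M[f]\,dv$ produces exactly $\rho_f^{\gamma}$ after the square root rather than some other exponent — this is where the precise constants $c_1, c_2$ and the relation $d = \frac{2}{\gamma-1} - n$ enter and must be used, not just estimated. A secondary technical point is justifying the splitting of the global integral $D_f$ into the contributions from $A$ and $A^c$, which works because the Bregman integrand $h(f) - h(M[f]) - h'(M[f])(f - M[f])$ is nonnegative pointwise in $v$.
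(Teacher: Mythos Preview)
Your direct Bregman/Cauchy--Schwarz strategy is \emph{not} the route the paper takes, and your sketch contains two concrete errors. First, moment conservation gives only $\int (1,v)(f-M[f])\,\dv = 0$, not $\int |v|^2(f-M[f])\,\dv = 0$; the second moments of $f$ and $M[f]$ differ in general, so your claim that ``moment conservation kills the $|v|^2$ term'' is false, and the identity $D_f = \int (h(f)-h(M[f]))\,\dv$ that you use downstream is wrong. Second, on $A^c$ you assert $\int_{A^c} H(f,v)\,\dv \le D_f$, justifying it by pointwise nonnegativity of the Bregman term; but $H(f,v)-H(M[f],v)$ is \emph{not} the Bregman term (it lacks the $-h'(M[f])(f-M[f])$ correction), and the integrand $H(f,v)-H(M[f],v)$ can be negative on $A$, so restricting the integral of $D_f$ to $A^c$ is not legitimate. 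A correct decomposition is $D_f = \int B(f,M[f])\,\dv + \tfrac12\int_{A^c}(|v|^2 - c_1\rho_f^{\gamma-1})f\,\dv$ with both pieces nonnegative, but this only controls $\int_{A^c}(|v|^2 - c_1\rho_f^{\gamma-1})f\,\dv$, not $\int_{A^c}|v|^2 f\,\dv$; closing the gap requires bounding $\rho_f^{\gamma-1}\int_{A^c} f\,\dv$, which is where the real difficulty sits (and where your own ``main obstacle'' remark applies with more force than you anticipate, especially for $d<2$).

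The paper avoids all of this. Its proof is a one-line reduction to the second kinetic model \eqref{eq : ext mod}: given $f=f(v)$, define the lift $\overline{f}(v,I) := c_3\mathbf{1}_{I \le (f(v)/c_2)^{1/d}}$, check by direct computation that $\rho_{\overline{f}}=\rho_f$, $u_{\overline{f}}=u_f$, $\int_0^\infty \overline{M}[\overline{f}]\,c_0 I^{d-1}\dI = M[f]$, and $\overline{D}_{\overline{f}} = D_f$, and then invoke the already-proved \cite[Proposition~4.6]{berthelinvasseur2005} for the second model (stated here as Lemma~\ref{lem b3}). The point is that in the second model both $\overline{f}$ and $\overline{M}[\overline{f}]$ are $\{0,c_3\}$-valued indicators, which makes the analogue of your $A/A^c$ analysis dramatically simpler and is why that case was settled in \cite{berthelinvasseur2005}; the lifting trick transports that simplicity back to the first model. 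Your approach, if repaired, would amount to reproving Lemma~\ref{lem b3} from scratch in the harder continuous-density setting.
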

A short proof is presented in Appendix \ref{sec : proof of lem dis}. Applying Lemma \ref{lem:dis} and \eqref{ineq : dissipation est} to the first model \eqref{BGK} we can verify that \eqref{eq:J} holds for all $\gamma \in \left(1,\frac{n+2}{n}\right]$ and Theorem \ref{thm : hyd} can be deduced.

To complete the hydrodynamic limit theory for barotropic gas dynamics initiated in \cite{berthelinvasseur2005}, we aim to answer in the positive the following question:

\vspace{.1cm}
\begin{center}
\emph{For any given $f_0$ with finite kinetic entropy, can we construct global-in-time weak solutions to \eqref{BGK} satisfying the kinetic entropy inequality?}
\end{center}

\subsection{Main Results}
The weakest possible assumption we can impose on $f_0$ is that it has finite kinetic entropy, namely
\begin{equation}
    \label{eq : init data f0 cond}
    f_0 \ge 0, \quad \iint_{\R^{2n}} (1+|v|^2) f_0  \dx\dv < \infty, \quad \text{and} \quad 
      \begin{cases}
      \|f_0\|_{L^{1+2/d}} <\infty &\displaystyle\gamma \in \lt(1, \frac{n+2}{n}\rt), \\[9pt]
      0\le f_0\le c_2 &\displaystyle \gamma = \frac{n+2}{n}.
      \end{cases}
\end{equation}
Solutions that obey the kinetic entropy inequality \eqref{eq : kin entrop ineq} should satisfy 
\begin{align}
\label{eq : f prop}
        \begin{cases}
            f \in L^\infty([0,\infty);L^1_2 \cap L^{1+2/d} (\R^{2n})) &\displaystyle \gamma \in \left(1,\frac{n+2}{n}\right),\\[9pt]
            f\in L^\infty([0,\infty);L^1_2(\R^{2n})) \quad\text{and} \quad 0 \le f \le c_2 &\displaystyle \gamma = \frac{n+2}{n}
        \end{cases}
\end{align}
where $L^1_2(\R^{2n}) := L^1(\R^{2n}, (1+|v|^2)\dx\dv)$.

Taking the above into consideration we define the following notion of a solution to \eqref{BGK}:
\begin{definition}
\label{def:mild}
Given $f_0\ge 0$, we say $f$ satisfying \eqref{eq : f prop} is a \textbf{mild solution} to \eqref{BGK} if it holds that
    \begin{align}
    \label{defeq : mild}
    f(t,x,v) = e^{-t/\tau} f_0(x-vt,v) + \frac{1}{\tau}\int_0^t e^{-(t-s)/\tau} M[f](s,x-v(t-s),v) \ds. 
    \end{align}    
    Moreover, if a mild solution $f$ verifies the kinetic entropy inequality \eqref{eq : kin entrop ineq} for all $t\in [0,\infty)$ we say $f$ is a \textbf{mild entropy solution}.
\end{definition}
We now state our main result concisely as follows.

\begin{theorem}
\label{thm : main theorem 1}
Let $n\ge 1$ and assume the initial data $f_0$ verifies \eqref{eq : init data f0 cond}. For any $\gamma \in \left(1,\frac{n+2}{n}\right]$ and $\kappa,\tau>0$ the Cauchy problem \eqref{BGK} admits a mild entropy solution.
\end{theorem}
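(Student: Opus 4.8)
The plan is to construct the mild solution by a fixed-point/iteration scheme built directly on the Duhamel formula \eqref{defeq : mild}, and then to upgrade it to a mild \emph{entropy} solution by establishing the kinetic entropy inequality through an approximation argument. Concretely, I would set $f^{0} := e^{-t/\tau}f_0(x-vt,v)$ and define $f^{k+1}$ by plugging $f^{k}$ into the right-hand side of \eqref{defeq : mild}, i.e. $f^{k+1}(t,x,v) = e^{-t/\tau}f_0(x-vt,v) + \tau^{-1}\int_0^t e^{-(t-s)/\tau} M[f^{k}](s,x-v(t-s),v)\,\ds$. The first task is to propagate the a priori bounds \eqref{eq : f prop} along this iteration: nonnegativity is immediate since $M\ge 0$; the $L^1_2$ bound follows because $M[f]$ has the same mass and controlled second moment as $f$ (using $\int (1,v,|v|^2)M[f]\,\dv$ in terms of $(\rho_f, \rho_f u_f, \rho_f|u_f|^2 + n\kappa\rho_f^\gamma)$, together with conservation of $\iint(1+|v|^2)f$ under free transport and the relaxation structure); and the $L^{1+2/d}$ bound (resp. the $L^\infty$ bound $f\le c_2$ in the endpoint case) is preserved because $M[f]$ itself obeys the pointwise bound $M[f]\le c_2$ and, for the $L^p$ norm, one uses the minimization principle \eqref{eq : min principle} combined with \eqref{eq: comp cond} to control $\|M[f]\|_{L^{1+2/d}}$ by the kinetic entropy, which is in turn controlled by the data. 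The point is that all these are \emph{closed} under the iteration on a fixed (in fact arbitrary) time interval, since the relaxation operator is globally Lipschitz-like once we have the stability estimates for $M$ alluded to in the abstract.

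The second task is convergence of the scheme. Here I would use the stability estimate for the Maxwellian — the key ingredient the authors highlight — of the form $\|M[f] - M[g]\|_{L^1_2(\R^n_v)} \le \omega\big(\|f-g\|_{L^1_2(\R^n_v)}\big)$ for the relevant macroscopic fields, uniformly over the bounded set carved out by \eqref{eq : f prop}; feeding this into the Duhamel formula and using the $e^{-(t-s)/\tau}$ factor gives a contraction in $C([0,T];L^1_2(\R^{2n}))$ for $T$ small, then iterating in time (the bounds being global) yields a global mild solution $f$. Uniqueness on each interval comes from the same contraction estimate. One subtlety: if the stability modulus $\omega$ is merely a modulus of continuity rather than Lipschitz (which is plausible near vacuum, $\rho_f = 0$, where $u_f$ is set to $0$ by fiat and $M$ may only be Hölder in $\rho$), the contraction argument must be replaced by a compactness argument — extract a weakly-$*$ convergent subsequence of $\{f^k\}$ in $L^\infty_t L^{1+2/d}$, use the velocity averaging lemma (the second tool named in the abstract) to get strong compactness of the moments $\rho_{f^k}, \rho_{f^k}u_{f^k}$, hence a.e. convergence of $M[f^k]$, and pass to the limit in \eqref{defeq : mild} by dominated convergence. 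I expect this vacuum/averaging route is in fact the one the authors take, since velocity averaging is explicitly invoked.

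The third and, I expect, hardest task is rigorously deriving the kinetic entropy inequality \eqref{eq : kin entrop ineq} for the constructed solution — the formal computation multiplies \eqref{BGK} by $\partial_f H(f,v)$ and integrates, but $H$ is only $C^1$ with unbounded derivative (and in the endpoint case is an extended-real convex function, so one must justify that $f\le c_2$ is preserved and that the "$\infty\cdot\mathbf 1_{f>c_2}$" term contributes nothing). My plan is to regularize: either work with the iterates $f^k$ (for which $\partial_t f^k + v\cdot\nabla_x f^k = \tau^{-1}(M[f^{k-1}] - f^k)$ holds in a mild sense, giving an entropy \emph{inequality} with a one-step-lagged Maxwellian that is amenable to the convexity inequality $H(f^k,v) - H(M[f^{k-1}],v) \le \partial_f H(f^k,v)(f^k - M[f^{k-1}])$ from \eqref{eq : min principle}-type monotonicity), or mollify $f$ in $(x,v)$ and truncate $H$, derive the inequality for the mollified object, and pass to the limit using lower semicontinuity of the entropy functional (for the left-hand side) and the strong $L^1_2$ convergence of the moments plus the compatibility condition \eqref{eq: comp cond} (for the dissipation term). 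The delicate points will be: (i) controlling the flux $|v|^2 f$ term at spatial infinity to justify the integration by parts in $x$ against the transport operator — handled by the finite second moment and a cutoff in $x$ that is removed using that $M[f]$ has no worse moments than $f$; (ii) in the endpoint case, showing the constraint $0\le f\le c_2$ genuinely propagates from $f_0\le c_2$, which follows from \eqref{defeq : mild} since $M[f]\le c_2$ pointwise and $e^{-t/\tau} + \tau^{-1}\int_0^t e^{-(t-s)/\tau}\,\ds = 1$ makes the right-hand side a convex combination bounded by $c_2$; and (iii) matching constants so that the dissipation term $H(f,v) - H(M[f],v)$ is genuinely nonnegative pointwise-in-$(s,x)$ after $v$-integration, which is exactly \eqref{eq : min principle} and is why the consistent normalization in \eqref{def : M} (Remark \ref{rem : max const factor}) matters. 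Once \eqref{eq : kin entrop ineq} is in hand, \eqref{ineq : dissipation est} and hence, via Lemma \ref{lem:dis} and Theorem \ref{thm : hyd}, the hydrodynamic limit follow immediately.
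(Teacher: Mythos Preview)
Your plan has the right ingredients but misses the structural device that makes them fit together, and the convergence step has a genuine gap. First, the stability estimate you invoke does \emph{not} hold uniformly over the set carved out by \eqref{eq : f prop}: even for $f\in L^1_2\cap L^{1+2/d}$ the bulk velocity $u_f$ need not be bounded (mass can sit near arbitrarily large $v$ while $\int |v|^2 f$ stays finite). The paper's Lemma~\ref{thm : theta} gives the $L^1_2$-Lipschitz bound \eqref{eq:key:sta} only under $\rho_f,\rho_g,|u_f|,|u_g|\le C_0$, so the contraction you want for the raw Maxwellian $M$ is simply unavailable. You sense a difficulty but misattribute it to vacuum; the real obstruction is unbounded $u_f$. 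Second, your fallback compactness route fails for a different reason: velocity averaging on $\{f^k\}$ yields strong convergence of the moments only along a \emph{subsequence} $k_j$, but the scheme has a one-step lag---$f^{k_j}$ is built from $M[f^{k_j-1}]$---and nothing forces $f^{k_j-1}$ to converge to the same limit, so you cannot close the fixed-point equation.

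The paper's fix is to insert an intermediate layer: replace $M$ by $M^{(\ve)}[f]:=M\big[\rho_f^{(\ve)},u_f^{(\ve)}\big]$ with $\rho_f^{(\ve)}=\rho_f/(1+\ve\rho_f)$ and $u_f^{(\ve)}=\rho_f u_f/(\rho_f+\ve(1+|\rho_f u_f|))$, so that $\rho_f^{(\ve)},|u_f^{(\ve)}|\le 1/\ve$ automatically. Now \eqref{eq:key:sta} applies, the Picard iteration for the $\ve$-problem is a genuine contraction in $L^\infty_{\rm loc}(L^1_2)$, and one obtains a full-sequence limit $f^\ve$. Velocity averaging and Vitali are then used only for the \emph{non-iterative} limit $\ve\to 0$, where no lag arises. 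The specific form of $M^{(\ve)}$ is not incidental: it guarantees the pointwise relations $\rho_f^{(\ve)}\le\rho_f$ and $|u_f^{(\ve)}|\le|u_f|$ (see \eqref{eq:keydif}), which are what allow the minimization principle---hence the uniform entropy bound, and the spatial tightness of $\rho_{f^\ve}$ without any assumption on $\int|x|^2 f_0$---to survive at the $\ve$-level, and they are used again when passing to the limit in the kinetic entropy inequality via iterated Fatou arguments.
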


\begin{remark}
    We emphasize that no additional assumptions on $f_0$ are imposed, namely we do not require compact support, rapid decay, or higher moments in $v$. In particular, we do not impose any assumption on the spatial moments such as
    \begin{align*}
        \iint_{\R^{2n}} |x|^2 f_0 \, \dx \dv < \infty.
    \end{align*}
\end{remark}

\begin{remark} \label{rem: weak}
    Any mild solution in the sense of Definition \ref{def:mild} is a solution to \eqref{BGK} in the distributional sense. Namely, for any $T>0$ and $\phi\in C^1_c([0,T)\times \R^{2n})$ with $\phi_0:= \phi(0,\cdot,\cdot)$
    \begin{align*}
        &-\iint_{\R^{2n}} f_0 \phi_0 \dx\dv - \int_0^T \iint_{\R^{2n}} f(\p_t\phi + v\cdot \nabla_x\phi)\dx\dv\dt\\
        &\quad = \frac{1}{\tau}\int_0^T \iint_{\R^{2n}}(M[f]-f)\phi\, \dx \dv \dt.
    \end{align*}
    %This can be verified by multiplying \eqref{defeq : mild} by $\p_t\phi(t,x,v)$ then integrating over $[0,T]\times \R^{2n}$:
    This can be confirmed by a direct computation which we provide in Appendix \ref{app: uniqueness}.
\end{remark}

%{\color{blue}
%\begin{remark} \label{rem: uniqueness}

%To the best of the authors' knowledge, there are no results concerning the uniqueness of solutions to \eqref{BGK}.   For the classical BGK model of \cite{bhatnagargrosskrook1954}, it was shown in \cite{perthamepulvirenti1993} that uniqueness holds under extra assumptions, one of which is that $f_0$ is bounded in a weighted $L^\infty$-space. For the BGK model considered in the current work \eqref{BGK}, however, direct estimates do not seem to reveal whether such bounds propagate with time. The lack of such bounds prevents us from applying stability results for the Maxwellian (see \eqref{eq:key:sta}), and this obstructs the establishment of uniqueness.
%\end{remark}}
%Despite our developments, it seems the discussion of the uniqueness of solutions to \eqref{BGK} is currently out of reach.

\subsection{Strategy of Proof}
\subsubsection{Stability Estimate for the Maxwellian}
As done in a classical approach in \cite{perthamepulvirenti1993} we derive a stability estimate for the Maxwellian (Lemma \ref{thm : theta}) in the $|v|^2$--weighted space $L^1_2(\R^{2n})$. This estimate is employed to establish the existence of solutions to an approximate system corresponding to \eqref{BGK}. In \cite{choihwang2024}, for the end-point case $\gamma = \frac{n+2}{n}$ a geometric argument was used to obtain the stability estimate. For the intermediate range $1<\gamma<\frac{n+2}{n}$ the mean value theorem was employed, but the range $\gamma \in \left(\frac{n+4}{n+2},\frac{n+2}{n}\right)$ (equivalently $\frac{d}{2} \in (0,1)$) could not be handled due to the singularities that arise at the boundary of the support of the Maxwellian.

Intriguingly, our ideas come from the second kinetic model for barotropic gas dynamics \eqref{eq : ext mod}. We do not discuss explicitly the second model but only employ the second Maxwellian $\overline{M}$ %that is introduced there
 \eqref{def : M bar}. For $\gamma\in\left(1,\frac{n+2}{n}\right)$ the relation between $M$ and $\overline{M}$ is given by
\begin{equation}\label{eq:M:key}
M[\rho, u](v) = c_2\left(c_1 \rho^{\gamma-1} - |v-u|^2\right)_+^{d/2} = \int_{0}^{\infty} \ov{M}[\rho,u](v,I) c_0 I^{d-1} \di.
\end{equation}
Thus instead of tackling $M$ itself we exploit \eqref{eq:M:key} to bypass the difficulties that arose in \cite{choihwang2024}. Precisely, we prove in this paper the following lemma.

\begin{lemma}
\label{thm : theta}
Let $n\ge1$. For any $\gamma \in \lt(1,\frac{n+2}{n}\rt]$, $\theta\in [0,1]$, and non-negative $f,g\in L^1(\R^n;(1+|v|)\dv)$, the following stability estimate
\begin{equation}\label{eq:M:sta}
\intr \lt|M[\rho_f,u_f]-M[\rho_g, u_g]\rt| \dv \le  |\rho_f -\rho_g | + C^\theta \min\{\rho_f,\rho_g\}^{1-\frac{\theta(\gamma-1)}{2}}|u_f - u_g|^\theta
\end{equation}
holds for some constant $C= C(\kappa)>0$. As a result, if we further assume the bounds $\rho_f, \rho_g, |u_f|, |u_g| \le C_0$ with $C_0>0$, then
\begin{equation}\label{eq:key:sta}
\intr (1+|v|^2)\lt|M[\rho_f, u_f]-M[\rho_g,u_g]\rt| \dv \le C(C_0,\gamma,\kappa)\lt( |\rho_f -\rho_g | +|u_f - u_g| \rt).
\end{equation}
\end{lemma}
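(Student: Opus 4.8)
The plan is to prove \eqref{eq:M:sta} first and then deduce \eqref{eq:key:sta}. As \eqref{eq:M:sta} is symmetric in $f$ and $g$, I will assume $\rho_f\le\rho_g$, so $\min\{\rho_f,\rho_g\}=\rho_f$, and split by the triangle inequality
\[
\intr\bigl|M[\rho_f,u_f]-M[\rho_g,u_g]\bigr|\dv\le\intr\bigl|M[\rho_f,u_f]-M[\rho_f,u_g]\bigr|\dv+\intr\bigl|M[\rho_f,u_g]-M[\rho_g,u_g]\bigr|\dv.
\]
The second (``density'') term is handled exactly: the map $\rho\mapsto M[\rho,u](v)$ is non-decreasing for each fixed $(v,u)$ since $\gamma>1$, and $\intr M[\rho,u]\dv=\rho$ for every $(\rho,u)$, so the second term equals $\intr\bigl(M[\rho_g,u_g]-M[\rho_f,u_g]\bigr)\dv=\rho_g-\rho_f=|\rho_f-\rho_g|$. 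It then remains to bound the first (``velocity'') term by $C^\theta\rho_f^{1-\theta(\gamma-1)/2}|u_f-u_g|^\theta$, and this is where the work is.

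For $\gamma\in(1,\frac{n+2}{n})$ I will exploit the representation \eqref{eq:M:key}. Put $R:=\sqrt{c_1}\,\rho_f^{(\gamma-1)/2}$, $\delta:=|u_f-u_g|$, and for $w\in\R^n$ let $B_w:=\{(v,I)\in\R^n\times(0,\infty):|v-w|^2+I^2<R^2\}$, a half-ball of radius $R$ centred at $(w,0)$. Then \eqref{eq:M:key}, together with $|\mathbf 1_A-\mathbf 1_B|=\mathbf 1_{A\triangle B}$, gives
\[
\intr\bigl|M[\rho_f,u_f]-M[\rho_f,u_g]\bigr|\dv\le c_0c_3\intr\int_0^\infty\bigl|\mathbf 1_{B_{u_f}}-\mathbf 1_{B_{u_g}}\bigr|\,I^{d-1}\di\dv=c_0c_3\,\mu\bigl(B_{u_f}\triangle B_{u_g}\bigr),
\]
where $\mu$ is the measure of density $I^{d-1}$ on $\{I>0\}$. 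Now $B_{u_f},B_{u_g}$ have the common radius $R$ and centres in $\{I=0\}$ at distance $\delta$, so slicing at height $I\in(0,R)$ the cross-section of $B_{u_f}\triangle B_{u_g}$ is $B^n(u_f,r_I)\triangle B^n(u_g,r_I)$ with $r_I=\sqrt{R^2-I^2}$. The elementary bound $|B^n(x,r)\triangle B^n(y,r)|\le C_n\min\{r^n,r^{n-1}|x-y|\}$, which for $\theta\in[0,1]$ yields $\le C_n r^{n-\theta}|x-y|^\theta$, applied on each slice and integrated in $I$ (the integral $\int_0^R I^{d-1}r_I^{\,n-\theta}\di$ is finite and bounded uniformly in $\theta\in[0,1]$ because $d>0$) gives $\mu(B_{u_f}\triangle B_{u_g})\le C_{n,d}\,R^{n+d-\theta}\delta^\theta$. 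Since $n+d=\frac{2}{\gamma-1}$, one has $R^{n+d}=c_1^{1/(\gamma-1)}\rho_f$ and $R^{-\theta}=c_1^{-\theta/2}\rho_f^{-\theta(\gamma-1)/2}$, hence $R^{n+d-\theta}\delta^\theta=c_1^{1/(\gamma-1)-\theta/2}\rho_f^{\,1-\theta(\gamma-1)/2}\delta^\theta$, which gives the velocity term a bound of the asserted form once the ($\theta$-uniform) constants are collected. The endpoint $\gamma=\frac{n+2}{n}$ needs no auxiliary variable: there $M[\rho,u]=c_2\mathbf 1_{B^n(u,R)}$ is already an indicator with $R=\sqrt{c_1}\,\rho^{1/n}$, so the same symmetric-difference estimate, now directly in $\R^n$, gives the claim.

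For \eqref{eq:key:sta} I will take $\theta=1$ in \eqref{eq:M:sta}: under $\rho_f,\rho_g\le C_0$ the factor $\min\{\rho_f,\rho_g\}^{1-(\gamma-1)/2}$ is bounded by a constant depending on $C_0$ and $\gamma$ (the exponent $1-\tfrac{\gamma-1}{2}$ being non-negative as $\gamma\le3$), which produces the unweighted estimate with the linear right-hand side. To restore the $1+|v|^2$ weight, observe that $|u_f|,|u_g|\le C_0$ forces both Maxwellians to be supported in the fixed ball $\{|v|\le C_0+\sqrt{c_1}\,C_0^{(\gamma-1)/2}\}$, so the weight costs only a constant factor; combining yields \eqref{eq:key:sta}.

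The main obstacle is the velocity term in the singular regime $\tfrac d2\in(0,1)$, i.e.\ $\gamma\in(\tfrac{n+4}{n+2},\tfrac{n+2}{n})$, where $M[\rho,u]$ has an unbounded gradient along the boundary of its support, so the mean-value argument of \cite{choihwang2024} fails. Passing to the bounded indicator $\overline M$ through \eqref{eq:M:key} is precisely what dissolves this difficulty: the only analytic input that remains is the integrability of $I^{d-1}$ at $I=0$, which holds exactly because $d>0$, and everything else reduces to the elementary geometry of symmetric differences of Euclidean balls.
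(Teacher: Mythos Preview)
Your proof is correct and follows essentially the same route as the paper: the same triangle-inequality splitting, exact computation of the density term, and---via the representation \eqref{eq:M:key}---reduction of the velocity term to symmetric differences of balls sliced at each height $I$. The only difference is cosmetic: you interpolate in $\theta$ at the slice level (using $\min\{r^n,r^{n-1}\delta\}\le r^{n-\theta}\delta^\theta$), whereas the paper first establishes the $\theta=1$ case with an explicit constant $\Lambda$ (and checks its uniformity in $\gamma$, so that $C=C(\kappa)$ as stated) and then interpolates against the trivial bound $2\rho$ by a case split on the size of $|u_f-u_g|$.
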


\subsubsection{Analysis of an Approximate System} \label{sssec : app}
    
    \medskip
   
   The estimate \eqref{eq:key:sta} requires that the macroscopic density and bulk velocity be bounded. Thus to apply \eqref{eq:key:sta} it is necessary to introduce a modified Maxwellian $M^{(\ve)}$, which automatically ensures \eqref{eq : diff of reg quantities}. We establish the existence of approximate solutions $f^\ve$ corresponding to $M^{(\ve)}$ via Picard iteration and then obtain uniform estimates for $f^\ve$.
   
   While similar in nature to that of \cite{choihwang2024}, our choice of modified Maxwellian $M^{(\ve)}$ is distinct in that it additionally guarantees the pointwise relations \eqref{eq:keydif}, which play an indispensable role throughout this work. As an instance, via Lemma \ref{lem:tight}, the relations in \eqref{eq:keydif} allow us to obtain tightness for $\rho_{f^\ve}$ without any assumptions on the spatial moments of the initial data.
   
In passing to the limit, compactness of the macroscopic variables %($\rho_{f^\ve},\rho_{f^\ve}u_{f^\ve})$}
is obtained by a version of the velocity averaging lemma (Proposition \ref{Prop : Vel Avg Lem}), then the strong convergence of $M^{(\ve)}[f^\ve]$ follows by the Vitali convergence theorem.  As our solutions are constructed via mild form, the strong convergence of the Maxwellians results in strong convergence of $f^\ve$ itself. We deduce that the limit $f$ is a mild solution to \eqref{BGK}.
   
   In order to derive the kinetic entropy inequality we first prove it for the approximate solutions $f^\ve$.
   Passing to the limit in the entropy inequality is a very delicate task. The pointwise relations \eqref{eq:keydif}, the minimization principle \eqref{eq : min principle}, the pointwise convergence of $f^\ve$ (up to a subsequence via the strong convergence), and iterated uses of Fatou's lemma are all crucially employed when passing to the limit.

    %{\color{blue}The strong convergence of the distributions $f^\ve\to f$ allows us to present, and also streamline, a complete proof of the entropy inequality.}

\subsection{Organization of the Paper}

In Section \ref{sec:sta} we prove the stability estimates given in Lemma \ref{thm : theta}. We consider the approximate system regarding the modified Maxwellian in Section \ref{sec:app}. The mild solution to \eqref{BGK} is obtained in Section \ref{sec:pss} by passing to the limit of the approximate solutions with help of a velocity averaging lemma. We rigorously establish the kinetic entropy inequality in Section \ref{sec:kei}, thus finishing the proof of Theorem \ref{thm : main theorem 1}.

Appendix \ref{rem:countex} discusses the minimization principle for the case $\gamma=\frac{n+2}{n}$. We present a proof of Lemma \ref{lem:dis} in Appendix \ref{sec : proof of lem dis}. Finally, in Appendix \ref{app: uniqueness}, we confirm that mild solutions to \eqref{BGK} satisfy the weak formulation of the equation. 

\section{Stability Estimate of the Maxwellian}\label{sec:sta}
In this section we prove Lemma \ref{thm : theta} regarding the stability of the Maxwellian. To simplify the presentation of the proofs we define for $r\in\R_+$ and $\bfc \in \R^n$
\begin{align*}
    &B[r,\bfc](v) := \mathbf{1}_{|v-\bfc|^2 \le r^2}(v)
\end{align*}
where $v \in \R^n$. For the ball in $\R^n$ with radius $r$ and center $\bfc$ we will write $B_r(\bfc)$. As a preliminary step we provide a stability estimate for $B$ with respect to the center $\bfc$.
\begin{lemma}
\label{lem : 2.1}
    Let $n\ge1$. For any $r >0$ and $\bfc_1,\bfc_2\in \R^n$ it holds that
\begin{align*}
  \int_{\R^n} |B[r,\bfc_1](v) - B[r,\bfc_2](v)| \dv \le  2|\bbB_{n-1}| r^{n-1}|\bfc_1-\bfc_2|,
\end{align*}
where $|\bbB_{n-1}|$ is the Lebesgue measure of the $(n-1)$-dimensional unit ball. For convention we take $|\bbB_0|=1$.
\end{lemma}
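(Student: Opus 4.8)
The plan is to reduce the claim to a one-dimensional estimate by slicing $\R^n$ along lines parallel to the displacement vector $\bfc_1-\bfc_2$. First, since Lebesgue measure is invariant under rigid motions, I would assume without loss of generality that $\bfc_1=0$ and $\bfc_2=h\,\bfe_1$, where $h:=|\bfc_1-\bfc_2|$ and $\bfe_1$ is the first coordinate vector; the case $h=0$ is trivial, so we take $h>0$. The integral in question is then the Lebesgue measure of the symmetric difference $B_r(0)\,\triangle\,B_r(h\bfe_1)$.

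Next I would write a point of $\R^n$ as $v=(t,y)$ with $t\in\R$ and $y\in\R^{n-1}$ (when $n=1$ the variable $y$ is vacuous and the conventions below degenerate correctly). For each fixed $y$ the slice $\{t\in\R : (t,y)\in B_r(0)\}$ is the interval $I_y:=[-\rho_y,\rho_y]$ with $\rho_y:=\sqrt{(r^2-|y|^2)_+}$, which is nonempty precisely when $|y|\le r$; crucially, because the translation is along $\bfe_1$ only, the corresponding slice of $B_r(h\bfe_1)$ is exactly the translate $I_y+h$, with the same half-length $\rho_y$. A direct computation with intervals gives $|I_y\triangle (I_y+h)|=2\min\{h,\,2\rho_y\}\le 2h$ for every $y$, while the symmetric difference is empty when $|y|>r$. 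Integrating in $y$ via Fubini's theorem,
\[
\int_{\R^n} \bigl|B[r,\bfc_1]-B[r,\bfc_2]\bigr|\,\dv \;=\; \int_{\R^{n-1}} \bigl|I_y\triangle (I_y+h)\bigr|\,\dy \;\le\; \int_{\{|y|\le r\}} 2h\,\dy \;=\; 2\,|\bbB_{n-1}|\,r^{n-1}\,h,
\]
which is exactly the asserted bound, the convention $|\bbB_0|=1$ handling $n=1$ (where the estimate reads $\le 2h$ and is sharp for $h\le 2r$).

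There is really no substantial obstacle here; the argument is entirely elementary. The one point worth stressing — and the reason the sharp constant $2|\bbB_{n-1}|r^{n-1}$ appears rather than something involving the full surface measure of $\partial B_r$ — is that translating in the single direction $\bfe_1$ leaves the one-dimensional fibers of the two balls identical up to translation, so the trivial interval estimate applies fiber by fiber; the only remaining care is bookkeeping of the degenerate case $n=1$.
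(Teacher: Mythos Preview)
Your argument is correct, and it takes a different (and arguably cleaner) route than the paper's proof. The paper argues geometrically: after observing that the integral equals $|B_r(\bfc_1)\triangle B_r(\bfc_2)|$, it shows by an explicit parametrization that $B_r(\bfc_1)\setminus B_r(\bfc_2)$ is contained in a ``slanted cylinder'' $\calS_1$ swept out by translating the back half of $\partial B_r(\bfc_1)$ by the vector $\bfc_2-\bfc_1$, computes $|\calS_1|=|\bbB_{n-1}|r^{n-1}|\bfc_1-\bfc_2|$, and doubles by symmetry; the case $n=1$ is handled separately. Your approach instead applies Fubini along lines parallel to the displacement, reducing everything to the trivial interval estimate $|I\triangle(I+h)|\le 2h$ on each fiber, and then integrates over the orthogonal cross-section $\{|y|\le r\}\subset\R^{n-1}$. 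Both methods exploit the same underlying geometry---the projection of the balls onto the hyperplane orthogonal to the displacement is the $(n-1)$-ball of radius $r$---but your slicing argument avoids the explicit containment lemma and treats $n=1$ uniformly rather than as a separate case.
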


\begin{proof}
We treat the case $n \ge 2$ first. It is clear that
\[
\intr \lt| B[r,\bfc_1](v)- B[r,\bfc_2](v) \rt| \dv = |B_{r}(\bfc_1) \bigtriangleup B_{r}(\bfc_2)|.
\]
Denote $B_i:=B_r(\bfc_i)$, $i=1,2$ for brevity. For each $\bfz \in \pa B_1$ satisfying $(\bfz-\bfc_1) \cdot (\bfc_2-\bfc_1) \le 0$ set $\bfz':= \bfz + \bfc_2-\bfc_1$. Note that $\bfz' \in \pa B_2$ with $(\bfz' - \bfc_2) \cdot (\bfc_2-\bfc_1) = (\bfz-\bfc_1) \cdot (\bfc_2-\bfc_1) \le 0$. Indeed
\[
|\bfz' - \bfc_2| = |\bfz - \bfc_1| = r, \,\,\,\text{and}\,\,\, |\bfz - \bfz'| = |\bfc_2-\bfc_1|
\]
so that $\bfc_1\bfc_2\bfz'\bfz$ forms a parallelogram (see \textbf{Figure 1}). We claim that
\[
B_1 \setminus B_2 \subset \{\bfw \in \R^n : \bfw \in L(\bfz,\bfz')\,\, \text{for some} \,\,\bfz \in \pa B_1, (\bfz-\bfc_1) \cdot (\bfc_2-\bfc_1) \le 0\} =:\calS_1
\]
where $L(\bfz,\bfz')$ is the set of points which consists the line segment with endpoints $\bfz$ and $\bfz'$. If $\bfw \in B_1 \setminus B_2$, we note that there are two $\theta \in \R$ satisfying
\begin{equation}\label{eq:wbm}
\bfw - \theta(\bfc_2-\bfc_1) \in \pa B_1
\end{equation}
with opposite sign. We denote by $\bfz_\bfw:= \bfw - \theta_+(\bfc_2-\bfc_1)$ the unique point satisfying \eqref{eq:wbm} with $\theta_+>0$. Observe that the following linear function 
\[
w(\theta):= \bfz_\bfw + \theta(\bfc_2-\bfc_1),\,\,\, \theta\in [0,\infty)
\]
satisfies
\[
w(0)=\bfz_\bfw, \,\,\, w(1)=\bfz_\bfw',\,\,\, w(\theta_+)=\bfw.
\]
Upon recalling 
\[
|\bfw - \bfc_1|\le r = |\bfz_\bfw - \bfc_1| = |\bfz_\bfw' - \bfc_2|  < |\bfw - \bfc_2|, 
\]
we deduce that $\bfw$ lies in the line segment with endpoints $\bfz_\bfw$ and $\bfz_\bfw'$. This means $\bfw \in \calS_1$, so the claim $B_1\setminus B_2 \subset \calS_1$ is proven. Hence, it follows that 
\[
|B_1\setminus B_2| \le |\calS_1| = |\mathbb{B}_{n-1}|r^{n-1}|\bfc_1-\bfc_2|.
\]
\begin{center}
\includegraphics[scale=0.6]{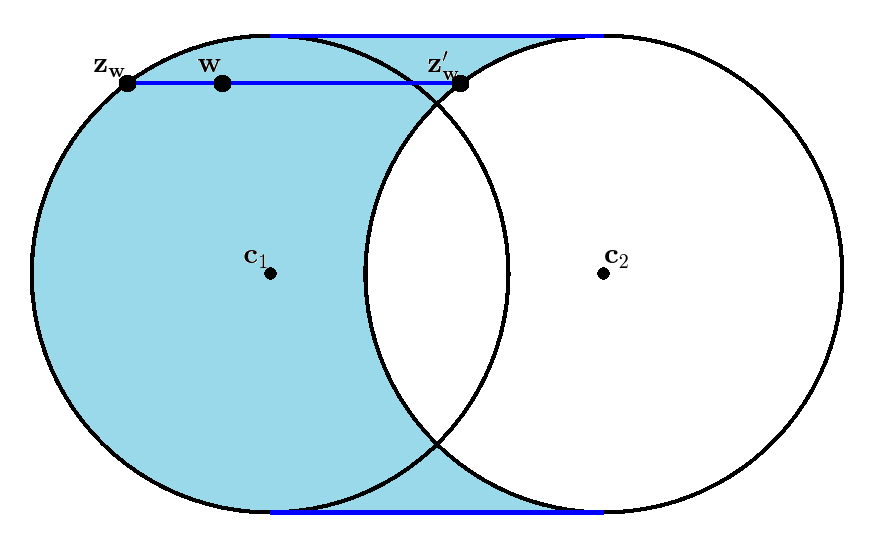}

\textbf{Figure 1 :} Illustration of $\mathbf{z}_\bfw, \bfw,$ and $\bfz_\bfw'$. The region $\calS_1$ is shaded.
\end{center}

Symmetrically we can construct a set $\calS_2$ from which we may deduce
\[
|B_1 \bigtriangleup B_2 | \le |\calS_1|+|\calS_2| = 2|\mathbb{B}_{n-1}|r^{n-1}|\bfc_1-\bfc_2|
\]
and this concludes the proof for $n\ge 2$. 

For the case $n=1$, we note that if $B[r,\bfc_1](v) \cap B[r,\bfc_2] \ne \varnothing$, then
\[
 \int_{\R} |B[r,\bfc_1](v) - B[r,\bfc_2](v)| \dv =  2|\bfc_1-\bfc_2|.
\]
On the other hand, if the two sets are disjoint, then $2r\le |\bfc_1 - \bfc_2|$. Thus:
\begin{align*}
    \int_{\R} |B[r,\bfc_1](v) - B[r,\bfc_2](v)| \dv = 4r \le 2|\bfc_1 - \bfc_2|.
\end{align*}
Altogether, the conclusion of the lemma follows.

\end{proof}

Consequently we can now obtain the stability estimates as asserted in Lemma \ref{thm : theta}.

%In particular, if $f,g:[0,T]\times\R^{2n}\to \R$ are such that
%    \begin{align*}
%    \max\{\rho_f,\rho_g,|u_f|,|u_g|\} \le C_0,  
%    \end{align*}
 %   then in view of \eqref{eq : obv rel} it holds
%    \begin{align*}
 %       \int_{\R^n} (1+|v|^2)|M[f]-M[g]|\dv \lesssim |\rho_f - \rho_g| + |u_f - u_g|.
%    \end{align*}

\begin{proof}[Proof of Lemma \ref{thm : theta}]
We consider the case $\gamma \in \lt(1,\frac{n+2}{n}\rt)$: without loss of generality, we may assume $\rho_1 \ge \rho_2$.

First, we discuss the case where both densities are nonzero, in other words $\rho_2 >0$. Write
\begin{equation*}
\begin{split}
 &\intr |M[\rho_1,u_1]-M[\rho_2,u_2]| \dv \\ & \quad \le \intr |M[\rho_1,u_1]-M[\rho_2,u_1]| \dv + \intr |M[\rho_2,u_1]-M[\rho_2,u_2]| \dv \\
 &\quad =: \calI_1 + \calI_2.
\end{split}
\end{equation*}
By utilizing the assumption $\rho_1 \ge \rho_2$, we obtain
\[
\calI_1 = \intr M[\rho_1,u_1]-M[\rho_2,u_1] \dv = \rho_1 - \rho_2.
\]
To estimate $\calI_2$, we crucially exploit the following relation
\[
M[\rho_2, u_i](v) = c_2d \int_0^{\lt(c_1\rho_2^{\gamma -1}\rt)^{\frac{1}{2}}} B\lt[\lt(c_1\rho_2^{\gamma -1} -I^2\rt)^{1/2}, u_i\rt](v) I^{d-1}\di, \quad i=1,2,
\]
which follows from \eqref{eq:M:key} (note that $c_0c_3 = c_2d$).
Writing $A = c_1 \rho_2^{\gamma-1}$ and letting $\beta(x,y)$ denote the Beta function,
{\allowdisplaybreaks
\begin{align}
\label{eq : i2}
\calI_2 &= c_2d \intr \lt|\int_0^{\sqrt{A}} B\lt[\lt( A -I^2\rt)^{1/2}, u_1\rt](v) - B\lt[\lt(A -I^2\rt)^{1/2}, u_2\rt](v) I^{d-1}\di \rt| \dv \nonumber\\
&\le c_2d \int_0^{\sqrt{A}} \intr \lt| B\lt[\lt(A -I^2\rt)^{1/2}, u_1\rt](v) - B\lt[\lt(A -I^2\rt)^{1/2}, u_2\rt](v)  \rt| \dv I^{d-1}\di \nonumber\\
&\le 2|\bbB_{n-1}|c_2d \int_0^{\sqrt{A}} (A -I^2)^{\frac{n-1}2} |u_1-u_2| I^{d-1}\di \qquad  \text{(by Lemma \ref{lem : 2.1})} \nonumber\\
&=|\bbB_{n-1}|c_2d A^{\frac{n+d}{2}-\frac{1}{2}}\int_0^1 (1-s)^{\frac{n+1}{2}-1}s^{\frac{d}{2}-1}\ds |u_1-u_2| \qquad (s:=I^2/A) \nonumber\\
&= |\bbB_{n-1}|c_2 d \,\, \beta\Big(\frac{n+1}{2},\frac{d}{2}\Big)\, c_1^{\frac{1}{\gamma-1}} c_1^{-\frac{1}{2}}\rho_2^{1-\frac{\gamma-1}{2}}|u_1-u_2| \nonumber\\
&= \Lambda \, \rho_2^{1-\frac{\gamma-1}{2}}|u_1-u_2| 
\end{align} }
where $\Lambda = \Lambda(\gamma,\kappa)>0$ is given by 
\[
\Lambda = \frac{2}{\sqrt{\pi}}\frac{\frac{1}{\gamma-1}\Gamma(\frac{1}{\gamma-1})}{\Gamma(\frac{1}{\gamma-1}+\frac{1}{2})}\frac{(\gamma-1)^{\frac{1}{2}}}{(2\gamma\kappa)^{\frac{1}{2}}},
\]
calculated thanks to the relations
\[
|\bbB_{n-1}|=\frac{\pi^{\frac{n-1}{2}}}{\Gamma(\frac{n+1}{2})},\quad \beta\lt(\frac{n+1}{2},\frac{d}{2}\rt) = \frac{\Gamma(\frac{n+1}{2})\Gamma(\frac{d}{2})}{\Gamma(\frac{n+d+1}{2})}, \quad \Gamma\lt(x+1\rt)= x\Gamma(x).
\]
Using the asymptotic formula
\[
\Gamma(x+\alpha) \sim \Gamma(x) x^\alpha
\]
as $x \to \infty$, letting $\gamma \to 1^+$ we have
\[
\Lambda \to \lt(\frac{2}{\pi \kappa}\rt)^{\frac{1}{2}},
\]
which means $\Lambda$ is bounded independently of $\gamma \in \left(1,\frac{n+2}{n}\right)$, in other words $\Lambda\le C = C(\kappa)$.

Let us proceed. For any $\theta \in [0,1]$, in the case $|u_1-u_2|< C^{-1}\rho_2^{\frac{\gamma-1}{2}}$ we can estimate \eqref{eq : i2} further using $\Lambda \le C$ by
\[
\calI_2 \le C^\theta \rho_2^{1- \frac{\gamma-1}{2}\theta} |u_1-u_2|^{\theta}.
\]
In the opposite case $|u_1-u_2| \ge C^{-1}\rho_2^{\frac{\gamma-1}{2}}$, that is $C \rho_2^{-\frac{\gamma-1}{2}}|u_1-u_2|\ge 1$, it is immediate that
\[
\calI_2 = \intr |M[\rho_2,u_1]-M[\rho_2,u_2]| \dv \le 2\rho_2 \le2 C^\theta \rho_2^{1- \frac{\gamma-1}{2}\theta}|u_1-u_2|^{\theta}.
\]
This proves \eqref{eq:M:sta}, under the assumption of nonzero densities, when $\gamma \in \lt(1,\frac{n+2}{n}\rt)$. The end point case $\gamma=\frac{n+2}{n}$ can be proven similarly.

Next, we prove \eqref{eq:M:sta} for the case where a zero density is present: assume $\rho_1 \ge \rho_2 = 0$. In this case, observe that by definition $M[\rho_2,u_2](v) \equiv 0$. This yields
\begin{align*}
    \int_{\R^n} |M[\rho_1,u_1] - M[\rho_2,u_2]| \,\dv &= \int_{\R^n} M[\rho_1,u_1] \,\dv = \rho_1 = |\rho_1 - \rho_2|,
\end{align*}
and we conclude that \eqref{eq:M:sta} holds in all possible cases.

Finally, by noting that
\[
v \in \text{supp} (M[\rho,u]) \implies |v| \le |u| + c_1\rho^{\gamma-1}
\]
if we further assume $\rho_1, \rho_2, |u_1|, |u_2| \le C_0$, then
\[
v \in \text{supp} (M[\rho_1,u_1]) \cup \text{supp} (M[\rho_2,u_2])\implies |v| \le C_0 + c_1 C_0^{\gamma-1}.
\]
This observation and the inequality \eqref{eq:M:sta} applied with $\theta = 1$ leads to \eqref{eq:key:sta}.

\end{proof}

\section{Approximate Solutions}\label{sec:app}

As mentioned in Section \ref{sssec : app} the stability estimate \eqref{eq:key:sta} is not immediately applicable to obtain the continuity $f \mapsto M[f]$ in $L^1_2$. Indeed even if say $f\in L^1_2 \cap L^\infty$ it is in general not expected that the macroscopic quantities $\rho_f$ or $u_f$ be bounded. This is what motivates the modification of the Maxwellian $M \mapsto M^{(\ve)}$ with a small parameter $\ve>0$.

Thus, we would now like to consider an approximate model where a modified Maxwellian is present. We set for $0<\ve<1$
\begin{align}
\label{eq : reg bgk}
    \begin{cases}
        \p_t f^\ve + v \cdot \nabla_x f^\ve = M^{(\ve)}[f^\ve] - f^\ve, \qquad f^\ve(0,x,v) = f_0(x,v),\\[6pt]
        M^{(\ve)}[f^\ve] := M\left[\rho_{f^\ve}^{(\ve)}, u_{f^\ve}^{(\ve)}\right],\\[6pt]
        \rho_{f^\ve}^{(\ve)} := \dfrac{\rho_{f^\ve}}{1 + \ve \rho_{f^\ve}}, \qquad u_{f^\ve}^{(\ve)} := \dfrac{\rho_{f^\ve}u_{f^\ve}}{\rho_{f^\ve} + \ve (1 + |\rho_{f^\ve}u_{f^\ve}|)}.
    \end{cases}
\end{align}

Using the stability estimates for $M$ derived in the previous section, we now aim to prove the existence of solutions to \eqref{eq : reg bgk}. From hereon we present proofs for our results only for the case where $\gamma \in \big(1,\frac{n+2}{n}\big)$. For any of the sections to follow the case $\gamma=\frac{n+2}{n}$ is easier, and we did not mention it because the presentation would become untidy. For example, we would have to repeatedly change the definition of $H(f,v)$.

Also, for the sake of convenience we shall from now set $\kappa = \tau =1$ because they play no role when discussing the existence of solutions.

%where $\varphi^\ve(x,v) = \theta^\ve(x)\theta^\ve(v)$, and $\theta^\ve(x) = \ve^{-d}\theta(x/\ve)$ is the standard mollifier. 
Note as a preliminary step that for any $f,g$ it holds that
\begin{equation}
\label{eq : diff of reg quantities}
\begin{split}
\rho_{f}^{(\ve)} \le \frac{1}{\ve},\quad |u_{f}^{(\ve)}| \le \frac{1}{\ve},\quad|\rho_{f}^{(\ve)} - \rho^{(\ve)}_g| \le |\rho_f - \rho_g|,\\
 \quad |u^{(\ve)}_f - u^{(\ve)}_g| \le \frac{2|\rho_f u_f - \rho_g u_g|}{\ve} + \frac{|\rho_g - \rho_f|}{\ve^2}.
\end{split}
\end{equation}
These relations and \eqref{eq:key:sta} imply that for any $f,g\in L^1_2(\R^{2n})$:
\begin{align}
\label{eq : max f g stab}
    \iint_{\R^{2n}}(1+|v|^2)|M^{(\ve)}[f] - M^{(\ve)}[g]| \dx \dv \le C(\ve)\iint_{\R^{2n}} (1+|v|^2)|f - g| \dx \dv.
\end{align}
We remark that \eqref{eq : diff of reg quantities} is also satisfied by the modified Maxwellian considered in \cite{choihwang2024}. The main difference is that our method additionally provides the following pointwise relations 
\begin{equation}\label{eq:keydif}
\rho_{f}^{(\ve)} \le \rho_f, \quad \quad |u_{f}^{(\ve)}|\le |u_{f}| .
\end{equation}
These relations are crucially used when applying Lemma \ref{lem:tight}, and also in the derivation of the kinetic entropy inequality later in Section \ref{sec:kei}.

The goal of this section is to prove the following.
\begin{proposition}
\label{prop : reg bgk}
    For each $\ve>0$ there exists $f^\ve\in L^\infty([0,\infty);L^1\cap L^{1+2/d}(\R^{2n}))$ a mild solution to \eqref{eq : reg bgk}, in other words it verifies
    \begin{align}\label{eq: mild form ve}
        f^\ve(t,x,v) = e^{-t} f_0(x-vt,v) + \int_0^t e^{s-t} M^{(\ve)}[f^\ve](s,x-v(t-s),v) \ds. 
    \end{align}
    Moreover, there is a uniform bound on the following quantities
        \begin{equation}
        \label{eq : reg bgk unif bdd}
        \begin{cases}
        \|f^\ve\|_{L^\infty([0,\infty);L^1 \cap L^{1+2/d}(\R^{2n}))} \le C(f_0),\\[8pt]
        \displaystyle \sup_{t \in [0,\infty)} \iint_{\R^{2n}} |v|^2 f^\ve \dx \dv \le C(f_0).
        \end{cases}
    \end{equation}
    Finally, for any $T>0$ the following tightness condition holds for the macroscopic densities:
    \begin{equation}
    \label{eq : tightness}
    \begin{split}
        &\lim_{R\to \infty} \sup_{0\le t \le T} \sup_{0<\ve<1} \int_{\{|x| > R\}} \rho_{f^\ve}(t,x) \dx = 0.
    \end{split}
    \end{equation}
\end{proposition}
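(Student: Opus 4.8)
\emph{Overview.} The statement has three essentially independent parts — existence of the mild solution $f^\ve$, the uniform bounds \eqref{eq : reg bgk unif bdd}, and the tightness \eqref{eq : tightness} — and I would treat them in that order. The construction is a Picard iteration built on the already-established stability estimate \eqref{eq : max f g stab}; the uniform bounds come from a Duhamel--convexity argument; and the tightness is the delicate step, which I expect to be the main obstacle.

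\emph{Existence.} I would construct $f^\ve$ by Picard iteration on the mild formulation \eqref{eq: mild form ve}, working in $C([0,T];L^1_2(\R^{2n}))$ for an arbitrary finite $T$. Writing $\Phi[h](t,x,v):=e^{-t}f_0(x-vt,v)+\int_0^t e^{s-t}M^{(\ve)}[h](s,x-v(t-s),v)\,\ds$ and using that $h(\cdot,v)\mapsto h(\cdot-v(t-s),v)$ is an $L^1_2$-isometry, the $\ve$-dependent Lipschitz bound \eqref{eq : max f g stab} gives $\|\Phi[h_1](t)-\Phi[h_2](t)\|_{L^1_2}\le C(\ve)\int_0^t\|h_1(s)-h_2(s)\|_{L^1_2}\,\ds$, so some iterate $\Phi^k$ is a contraction and $\Phi$ has a unique fixed point on each $[0,T]$; patching yields a global mild solution $f^\ve$, which is nonnegative since $\Phi$ preserves nonnegativity. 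From \eqref{eq: mild form ve}, Minkowski's inequality, the scaling identity $\|M[\rho,u]\|_{L^{1+2/d}(\R^n)}^{1+2/d}=C_\gamma\rho^\gamma$, and $\rho_{f^\ve}^{(\ve)}\le\ve^{-1}$ with $\|\rho_{f^\ve}^{(\ve)}\|_{L^1}\le\|f^\ve\|_{L^1}\le\|f_0\|_{L^1}$ (the last again by a Gronwall argument on \eqref{eq: mild form ve}), one obtains $f^\ve\in L^\infty([0,\infty);L^1\cap L^{1+2/d}(\R^{2n}))$, with an $\ve$-dependent bound at this stage; in particular $E^\ve(t):=\iint_{\R^{2n}}H(f^\ve(t),v)\,\dx\,\dv<\infty$ for all $t$.

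\emph{Uniform bounds.} For fixed $(x,v)$, \eqref{eq: mild form ve} exhibits $f^\ve(t,x,v)$ as the average of $f_0(x-vt,v)$ and $\{M^{(\ve)}[f^\ve](s,x-v(t-s),v)\}_{0\le s\le t}$ against the probability measure $e^{-t}\delta_0+e^{s-t}\mathbf 1_{(0,t]}(s)\,\ds$ in $s$; since $a\mapsto H(a,v)$ is convex, Jensen's inequality followed by integration in $(x,v)$ — using the $x$-translation invariance of $\iint H(\cdot,v)\,\dx\,\dv$ — yields
\[
E^\ve(t)\le e^{-t}E^\ve(0)+\int_0^t e^{s-t}\iint_{\R^{2n}}H(M^{(\ve)}[f^\ve](s),v)\,\dx\,\dv\,\ds.
\]
For a.e. $(s,x)$, the compatibility condition \eqref{eq: comp cond} evaluates $\int_{\R^n}H(M^{(\ve)}[f^\ve](s,x,\cdot),v)\,\dv=\tfrac12\rho_{f^\ve}^{(\ve)}|u_{f^\ve}^{(\ve)}|^2+\tfrac1{\gamma-1}(\rho_{f^\ve}^{(\ve)})^\gamma$, which by the pointwise relations \eqref{eq:keydif} and monotonicity is $\le\tfrac12\rho_{f^\ve}|u_{f^\ve}|^2+\tfrac1{\gamma-1}\rho_{f^\ve}^\gamma=\int_{\R^n}H(M[f^\ve](s,x,\cdot),v)\,\dv$, in turn $\le\int_{\R^n}H(f^\ve(s,x,\cdot),v)\,\dv$ by the minimization principle \eqref{eq : min principle}. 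Hence the double integral in the display is $\le E^\ve(s)$, so $e^tE^\ve(t)\le E^\ve(0)+\int_0^t e^sE^\ve(s)\,\ds$ and Gronwall gives $E^\ve(t)\le E^\ve(0)=\iint H(f_0,v)\,\dx\,\dv=:C(f_0)$ for all $t$, uniformly in $\ve$. Reading off the two summands of $H$ yields the $L^{1+2/d}$ and $|v|^2$-moment bounds in \eqref{eq : reg bgk unif bdd}, and $\|f^\ve\|_{L^\infty_tL^1}\le\|f_0\|_{L^1}$ was already established.

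\emph{Tightness, and the main difficulty.} This is the delicate step. The naive bound $\mathbf 1_{|y+v\sigma|>R}\le\mathbf 1_{|y|>R/2}+\mathbf 1_{|v|>R/(2T)}$ turns \eqref{eq: mild form ve} into a recursion of the shape (spatial tail mass at scale $R$) $\lesssim o_R(1)+$(spatial tail mass at scale $R/2$), which does \emph{not} close. Instead I would test \eqref{eq: mild form ve} against a smooth cutoff $\chi_R(x)=\chi(x/R)$ with $0\le\chi\le1$, $\chi\equiv0$ on the unit ball, $\chi\equiv1$ outside the ball of radius $2$, and $|\nabla\chi_R|\le C/R$. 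With $n^\ve_R(t):=\iint_{\R^{2n}}\chi_R(x)f^\ve(t,x,v)\,\dx\,\dv$ and the change of variables $x\mapsto x+v\sigma$ along characteristics, the Lipschitz estimate $|\chi_R(y+v\sigma)-\chi_R(y)|\le(CT/R)|v|$ splits each term of \eqref{eq: mild form ve} into $\int\chi_R(y)(\cdot)\,\dy$ plus an error $\le(CT/R)\iint|v|(\cdot)$. The error terms are absorbed by moments already controlled: $\iint|v|f_0<\infty$, and $\iint|v|M^{(\ve)}[f^\ve](s)\lesssim\|\rho_{f^\ve}^{(\ve)}(s)\|_{L^1}+\iint|v|^2M^{(\ve)}[f^\ve](s)$, where $\iint|v|^2M^{(\ve)}[f^\ve](s)=\int\big(\rho_{f^\ve}^{(\ve)}|u_{f^\ve}^{(\ve)}|^2+n(\rho_{f^\ve}^{(\ve)})^\gamma\big)\le\int\big(\rho_{f^\ve}|u_{f^\ve}|^2+n\rho_{f^\ve}^\gamma\big)\le C(f_0)$ by Cauchy--Schwarz, \eqref{eq:keydif}, and the $L^\gamma_x$-bound on $\rho_{f^\ve}$ extracted from the minimization principle. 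Using \eqref{eq:keydif} once more in the form $\int\chi_R(y)\rho_{f^\ve}^{(\ve)}(s,y)\,\dy\le\int\chi_R(y)\rho_{f^\ve}(s,y)\,\dy=n^\ve_R(s)$, one arrives at $n^\ve_R(t)\le\delta_R+\int_0^t e^{s-t}n^\ve_R(s)\,\ds$ with $\delta_R:=\int_{|x|\ge R}\rho_{f_0}\,\dx+C(f_0)T/R\to0$ as $R\to\infty$. Gronwall then gives $n^\ve_R(t)\le(1+T)\delta_R$ on $[0,T]$, uniformly in $\ve\in(0,1)$, and since $\int_{|x|\ge 2R}\rho_{f^\ve}(t,\cdot)\,\dx\le n^\ve_R(t)$ this proves \eqref{eq : tightness} (cf. Lemma \ref{lem:tight}). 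The hard part is exactly this: producing a Gronwall-closable inequality for $n^\ve_R$ instead of the non-closing halving recursion — which is what the smooth-cutoff device together with the monotonicity \eqref{eq:keydif} achieves, and why tightness holds with no hypothesis on the spatial moments of $f_0$.
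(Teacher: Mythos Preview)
Your argument is correct in all three parts, but two of them take a route genuinely different from the paper's, so a brief comparison is in order.

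For the uniform bounds \eqref{eq : reg bgk unif bdd}, the paper works at the level of the Picard iterates $f_k$: it tests the linearized equation \eqref{eq: lin k} against $\tfrac{d+2}{d}f_{k+1}^{2/d}$, uses Young's inequality to obtain the differential inequality $\tfrac{\textnormal{d}}{\textnormal{d}t}\iint H(f_{k+1},v)+\iint H(f_{k+1},v)\le\iint H(M^{(\ve)}[f_k],v)$, reduces the right-hand side to $\iint H(f_k,v)$ via \eqref{eq: comp cond}, \eqref{eq:keydif}, and \eqref{eq : min principle}, and then closes by the inductive Lemma~\ref{lem:ind}. Your Jensen-on-the-mild-form argument reaches the same entropy inequality directly for $f^\ve$ without differentiating in $t$ and without having to carry estimates through the $k\to\infty$ limit; it is the more elegant of the two. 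For the tightness \eqref{eq : tightness}, the roles are reversed: the paper (Lemma~\ref{lem:tight}) tests the \emph{weak} formulation of the transport equation against $\chi_R$ and exploits the sign condition $\int_{\R^n}(M^{(\ve)}[f^\ve]-f^\ve)\,\dv=\rho_{f^\ve}^{(\ve)}-\rho_{f^\ve}\le0$ to drop the collision term outright, arriving immediately at $\int_{|x|\ge 2R}\rho_{f^\ve}\le\int_{|x|\ge R}\rho_{f_0}+2\calE T/R$ with no Gronwall step. Your version stays on the mild form and closes via Gronwall, which is a bit longer but has the advantage of not invoking the mild-to-weak identification of Appendix~\ref{app: uniqueness}. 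One minor correction: your Gronwall step gives $n^\ve_R(t)\le e^{T}\delta_R$, not $(1+T)\delta_R$; this does not affect the conclusion.
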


We begin with a standard iterative sequence $\{f^\ve_k\}_{k\ge 0}$ with initial step $f^\ve_0:= f_0$ at $k=0$.
\begin{lemma}
\label{lem : iterative seq}
    For each $\ve >0$ and $k\in \bbN$ there is a unique mild solution $f^\ve_{k+1}\in L^\infty((0,\infty);L^1\cap L^{1+2/d}(\R^{2n}))$ to the system
    \begin{align} \label{eq: lin k}
    \begin{cases}
        \p_t f^\ve_{k+1} + v\cdot \nabla_x f^\ve_{k+1} = M^{(\ve)}[f_k^\ve] - f_{k+1}^\ve,\\
        f_{k+1}^\ve(0,x,v) = f_0(x,v)
    \end{cases}
    \end{align}
    verifying
    \begin{equation}
    \label{eq : k mild form}
    f_{k+1}^\ve(t,x,v) = e^{-t} f_0(x-vt,v) + \int_0^t e^{s-t} M^{(\ve)}[f_k](s,x-v(t-s),v) \ds.
\end{equation}
Moreover there are the uniform-in-$k,\ve$ estimates
\begin{align}
\label{eq : unif in k, ve}
\begin{cases}
    \|f^\ve_{k}\|_{L^\infty([0,\infty);L^1\cap L^{1+2/d}(\R^{2n}))} \le C(f_0),\\
    \|f^\ve_{k}\|_{L^\infty([0,\infty);L^{1}_2(\R^{2n}))} \le C(f_0).
\end{cases}
\end{align}
\end{lemma}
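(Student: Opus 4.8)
The argument is a Picard iteration, and the only real choice is the quantity to propagate along it. I would argue by induction on $k$; the base case $f^\ve_0=f_0$ is covered by the standing hypothesis \eqref{eq : init data f0 cond}. Assuming $f^\ve_k$ obeys \eqref{eq : unif in k, ve}, Fubini gives $f^\ve_k(t,x,\cdot)\in L^1(\R^n;(1+|v|)\dv)$ for a.e.\ $(t,x)$, so $\rho^{(\ve)}_{f^\ve_k}$ and $u^{(\ve)}_{f^\ve_k}$ are measurable functions of $(t,x)$ bounded by $1/\ve$ thanks to \eqref{eq : diff of reg quantities}. Since the map $(\rho,u,v)\mapsto M[\rho,u](v)$ is continuous (recall $d>0$ here), the source $M^{(\ve)}[f^\ve_k]$ is jointly measurable, bounded pointwise by $c_2(c_1\ve^{-(\gamma-1)})^{d/2}$, and supported in a fixed ball $\{|v|\le R_\ve\}$; moreover $\int_{\R^n}M^{(\ve)}[f^\ve_k]\dv=\rho^{(\ve)}_{f^\ve_k}$. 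One then \emph{defines} $f^\ve_{k+1}$ by the Duhamel formula \eqref{eq : k mild form}: this is a nonnegative measurable function with $f^\ve_{k+1}(0,\cdot,\cdot)=f_0$; a direct computation along characteristics (cf.\ Appendix \ref{app: uniqueness}) confirms it solves \eqref{eq: lin k} distributionally, and it is the unique solution in the stated class since \eqref{eq : k mild form} determines it outright.

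The heart of the proof is the uniform-in-$(k,\ve)$ bound \eqref{eq : unif in k, ve}. I would not try to propagate the $L^1_2$ or $L^{1+2/d}$ norms directly by Gronwall: the natural estimates $\|M[f]\|\lesssim\|f\|$ in those norms carry constants that need not be $\le 1$, so iterating would produce a bound growing geometrically in $k$. Instead the object to track is the total kinetic entropy $\calH_k(t):=\iint_{\R^{2n}}H(f^\ve_k(t),v)\dx\dv$. Because $g\mapsto H(g,v)$ is convex and \eqref{eq : k mild form} expresses $f^\ve_{k+1}(t,x,v)$ as a convex combination — with weights $e^{-t}$ at $s=0$ and $e^{s-t}\ds$ on $(0,t]$, which sum to $1$ — of $f_0(x-vt,v)$ and $\{M^{(\ve)}[f^\ve_k](s,x-v(t-s),v)\}_{s}$, Jensen's inequality together with translation invariance of Lebesgue measure yields
\[
\calH_{k+1}(t)\le e^{-t}\calH_0+\int_0^t e^{s-t}\iint_{\R^{2n}}H\big(M^{(\ve)}[f^\ve_k](s),v\big)\dx\dv\,\ds .
\]
By the compatibility condition \eqref{eq: comp cond}, the inner integral equals $\int_{\R^n}\big(\tfrac12\rho^{(\ve)}_{f^\ve_k}|u^{(\ve)}_{f^\ve_k}|^2+\tfrac{\kappa}{\gamma-1}(\rho^{(\ve)}_{f^\ve_k})^\gamma\big)\dx$; by the pointwise relations \eqref{eq:keydif} it is at most $\int_{\R^n}\big(\tfrac12\rho_{f^\ve_k}|u_{f^\ve_k}|^2+\tfrac{\kappa}{\gamma-1}\rho_{f^\ve_k}^\gamma\big)\dx=\iint_{\R^{2n}}H(M[f^\ve_k],v)\dx\dv$, which is in turn at most $\calH_k(s)$ by the minimization principle \eqref{eq : min principle}. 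Since $\calH_0(s)\equiv\calH_0$ and $e^{-t}+\int_0^t e^{s-t}\ds=1$, an elementary induction on $k$ gives $\calH_k(t)\le\calH_0=\iint_{\R^{2n}}H(f_0,v)\dx\dv$ for all $k$, $t$, and $\ve$.

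The rest follows cheaply. The entropy bound gives $\iint_{\R^{2n}}|v|^2f^\ve_k(t)\,\dx\dv\le2\calH_0$ and $\|f^\ve_k(t)\|_{L^{1+2/d}}^{1+2/d}\le C\calH_0$ at once (and likewise $\|M^{(\ve)}[f^\ve_k](s)\|_{L^{1+2/d}}^{1+2/d}\le C\calH_0$, since $H(g,v)$ dominates a fixed multiple of $g^{1+2/d}$). For the plain $L^1$ norm, $\int_{\R^n}M^{(\ve)}[f^\ve_k]\dv=\rho^{(\ve)}_{f^\ve_k}\le\rho_{f^\ve_k}$, so integrating \eqref{eq : k mild form} over $(x,v)$ gives $\|f^\ve_{k+1}(t)\|_{L^1}\le e^{-t}\|f_0\|_{L^1}+\int_0^t e^{s-t}\|f^\ve_k(s)\|_{L^1}\ds$, and a second induction gives $\|f^\ve_k(t)\|_{L^1}\le\|f_0\|_{L^1}$; combining, \eqref{eq : unif in k, ve} holds with a constant depending only on $\|f_0\|_{L^1}$ and $\iint_{\R^{2n}}H(f_0,v)\dx\dv$, both finite by \eqref{eq : init data f0 cond}. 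I expect the only genuine obstacle to be exactly this: recognizing that the iteration must be closed at the level of the entropy — where \eqref{eq : min principle} supplies a contraction constant of precisely $1$ — rather than at the level of the weighted $L^p$ norms, and carefully using \eqref{eq:keydif} so that the compatibility identity can be invoked for $M^{(\ve)}[f^\ve_k]$ and not just for $M[f^\ve_k]$. The measurability points and the $\ve$-dependent pointwise bounds on $M^{(\ve)}[f^\ve_k]$ are routine.
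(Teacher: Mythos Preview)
Your proof is correct and follows the same strategy as the paper: close the iteration at the level of the kinetic entropy $\calH_k(t)=\iint H(f^\ve_k,v)\,\dx\dv$, using the chain \eqref{eq: comp cond}$+$\eqref{eq:keydif}$+$\eqref{eq : min principle} to obtain a recursive inequality with unit contraction constant, and handle the $L^1$ norm separately via $\rho^{(\ve)}_{f_k}\le\rho_{f_k}$.

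The one tactical difference is how you reach the recursive inequality for $\calH_{k+1}$. The paper works in differential form: it tests \eqref{eq: lin k} against $\tfrac{d+2}{d}f_{k+1}^{2/d}$, applies Young's inequality to the cross term $M^{(\ve)}[f_k]\,f_{k+1}^{2/d}$, adds the $|v|^2$-weighted balance, and then invokes a small Gronwall-type lemma (Lemma~\ref{lem:ind}) on the resulting ODE inequality. You instead exploit the convexity of $g\mapsto H(g,v)$ directly on the Duhamel formula via Jensen's inequality, which lands on the same integral inequality in one step and avoids both the Young trick and the auxiliary lemma. This is a mild simplification; neither approach buys anything the other does not, and both rely on exactly the same structural ingredients you identified.
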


To streamline the proof of uniform estimates, we present the following auxiliary lemma.

\begin{lemma}\label{lem:ind}
Let $\{a_k(\cdot)\}_{k=0}^\infty$ be a sequence of $C^1([0,\infty);\R)$ functions satisfying
\[
a_{k}'(t) +a_{k}(t) \le a_{k-1}(t),\quad a_k(0)=a_0(t) = a_0 \in [0,\infty)
\]
for all $t \ge 0$ and $k \in \N$. Then we have $a_k(t) \le a_0$.
\end{lemma}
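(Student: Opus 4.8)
The plan is to argue by induction on $k$, using the integrating factor $e^{t}$ to convert the differential inequality into an integral comparison. The base case $k=0$ is immediate since $a_0(t)\equiv a_0$, so $a_0(t)\le a_0$ trivially.

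For the inductive step, I would assume $a_{k-1}(t)\le a_0$ for all $t\ge 0$ and feed this into the hypothesis $a_k'(t)+a_k(t)\le a_{k-1}(t)$ to get $a_k'(t)+a_k(t)\le a_0$. Multiplying through by $e^{t}$ and recognizing the left-hand side as a derivative gives
\[
\frac{d}{dt}\bigl(e^{t}a_k(t)\bigr)=e^{t}\bigl(a_k'(t)+a_k(t)\bigr)\le a_0 e^{t}.
\]
Integrating from $0$ to $t$ and using $a_k(0)=a_0$ yields $e^{t}a_k(t)-a_0\le a_0(e^{t}-1)$, i.e. $e^{t}a_k(t)\le a_0 e^{t}$, hence $a_k(t)\le a_0$. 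This closes the induction.

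There is essentially no serious obstacle here; the only points requiring a sentence of care are that the $C^1$ regularity is exactly what licenses writing $e^{t}(a_k'+a_k)=(e^{t}a_k)'$ and integrating, and that the hypotheses $a_k(0)=a_0$ and $a_0(t)\equiv a_0$ are being used to start the induction and to evaluate the boundary term. If one wanted to avoid induction, an equivalent route is to observe directly that $b_k(t):=a_0-a_k(t)$ satisfies $b_k'(t)+b_k(t)\ge a_0-a_{k-1}(t)=:b_{k-1}(t)$ with $b_k(0)=0$, and then show $b_k\ge 0$ by the same integrating-factor argument; but the straightforward induction above is cleanest.
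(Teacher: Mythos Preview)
Your proof is correct and is essentially the same as the paper's: both argue by induction on $k$ and use the integrating factor $e^{t}$ (the paper phrases this step as ``Gr\"onwall's lemma'') to pass from the differential inequality to $a_k(t)\le e^{-t}a_0 + a_0(1-e^{-t})=a_0$. The only cosmetic difference is that the paper first writes the integral form $a_{k+1}(t)\le e^{-t}a_0+\int_0^t e^{s-t}a_k(s)\,\mathrm{d}s$ and then inserts the inductive bound, whereas you insert the bound before integrating.
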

\begin{proof}
At the base step $k=0$, the assertion of the lemma is trivial by the assumption $a_0(t) = a_0$. Assume now that the hypothesis holds as $a_k(t) \le a_0$. An application of Gr\"onwall's lemma to the given inequality shows that
\begin{align*}
    a_{k+1}(t) &\le e^{-t} a_{k+1}(0) + \int_0^t e^{s-t} a_k(s) \ds \\
    &\le e^{-t} a_{k+1}(0) + a_0 \int_0^t e^{s-t} \ds \\
    &= e^{-t} a_0 + a_0 (1 - e^{-t}) = a_0,
\end{align*}
which completes the induction.
\end{proof}

\begin{proof}[Proof of Lemma \ref{lem : iterative seq}]
Since $\rho_{f}^{(\ve)}, u_{f}^{(\ve)} \le 1/\ve$ holds for any $f$, we have $M^{(\ve)}[\cdot]\in L^\infty([0,\infty);L^1\cap L^\infty(\R^{2n}))$. The existence and uniqueness of each $f^\ve_k \in L^\infty([0,\infty);L^1\cap L^\infty(\R^{2n}))$ which verifies \eqref{eq : k mild form} then follows by standard results for linear transport equations. Assuming a fixed value of $\ve$ let us write $f_k := f^\ve_k$ to lighten the notation.

Since
\begin{align*}
    \frac{\textnormal{d}}{\textnormal{d}t}\iint_{\R^{2n}} f_{k+1} \dx \dv + \iint_{\R^{2n}} f_{k+1}\dx\dv &= \iint_{\R^{2n}} M^{(\ve)}[f_k] \dx \dv \\
    &= \int_{\R^n} \rho^{(\ve)}_{f_k} \dx 
    \le \int_{\R^n} \rho_{f_k} \dx
    = \iint_{\R^{2n}} f_k \, \dx \dv ,
\end{align*}
we apply Lemma \ref{lem:ind} to deduce that
\begin{align*}
    \sup_{t\in [0,\infty)} \iint_{\R^{2n}} f_{k+1}\dx \dv \le \iint_{\R^{2n}} f_0\dx\dv,
\end{align*}
which is the desired uniform $L^1$-estimate mentioned in $\eqref{eq : unif in k, ve}_1$.

To obtain the $L^{1+2/d}$-estimate, we test $\frac{d+2}{d}f_{k+1}^{2/d}$ against the linearized equation \eqref{eq: lin k}. This, and an application of Young's inequality, show
\begin{align*}
    \frac{\textnormal{d}}{\textnormal{d}t} \iint_{\R^{2n}} f_{k+1}^{1+2/d} \dx\dv
    &= \frac{d+2}{d}\iint_{\R^{2n}} M^{(\ve)}[f_k] \, f_{k+1}^{2/d} \dx\dv - \frac{d+2}{d}\iint_{\R^{2n}} f_{k+1}^{1+2/d}\dx\dv \\
    &\le \frac{d+2}{d}\iint_{\R^{2n}} \left(\frac{d}{d+2}(M^{(\ve)}[f_k])^{1+2/d} + \frac{2}{d+2} f_{k+1}^{1+2/d} \right)\dx\dv\\
    &\quad - \frac{d+2}{d}\iint_{\R^{2n}} f_{k+1}^{1+2/d}\dx\dv\\
    &= \iint_{\R^{2n}} (M^{(\ve)}[f_k])^{1+2/d} \dx\dv - \iint_{\R^{2n}} f_{k+1}^{1+2/d}\dx\dv.
\end{align*}
Hence, we obtain
\begin{align*}
    \frac{\textnormal{d}}{\textnormal{d}t}\iint_{\R^{2n}} f_{k+1}^{1+2/d} \dx \dv + \iint_{\R^{2n}} f_{k+1}^{1+2/d} \dx \dv \le \iint_{\R^{2n}} (M^{(\ve)}[f_k])^{1+2/d} \dx \dv.
\end{align*}
On the other hand, it is immediate that
\begin{align*}
    \frac{\textnormal{d}}{\textnormal{d}t}\iint_{\R^{2n}} \frac{|v|^2}{2} f_{k+1} \dx \dv + \iint_{\R^{2n}} \frac{|v|^2}{2} f_{k+1} \dx \dv = \iint_{\R^{2n}} \frac{|v|^2}{2} M^{(\ve)}[f_k] \dx \dv.
\end{align*}
Together we find
\begin{align}
\label{eq : d dt H f_k+1}
    \frac{\textnormal{d}}{\textnormal{d}t}\iint_{\R^{2n}} H(f_{k+1},v) \dx \dv + \iint_{\R^{2n}} H(f_{k+1},v) \dx \dv \le \iint_{\R^{2n}} H(M^{(\ve)}[f_k],v) \dx \dv.
\end{align}
From \eqref{eq: comp cond} and the pointwise relations in \eqref{eq:keydif}
\begin{equation}
\label{eq : comparing M and Mve}
\begin{split}
    \int_{\R^n} H(M^{(\ve)}[f_k],v) \dv &= \frac{1}{2}(\rho_{f_k}^{(\ve)})|u_{f_k}^{(\ve)}|^2 + \frac{1}{\gamma-1}(\rho_{f_k}^{(\ve)})^\gamma  \\
    &\le \frac{1}{2}\rho_{f_k}|u_{f_k}|^2 + \frac{1}{\gamma-1}(\rho_{f_k})^\gamma = \int_{\R^n} H(M[f_k],v)\dv.
    \end{split}
\end{equation}
Using \eqref{eq : comparing M and Mve} and the minimization principle, \eqref{eq : d dt H f_k+1} reduces to
\begin{align} \label{eq: H k est}
    \frac{\textnormal{d}}{\textnormal{d}t}\iint_{\R^{2n}} H(f_{k+1},v) \dx \dv + \iint_{\R^{2n}} H(f_{k+1},v) \dx \dv \le \iint_{\R^{2n}} H(f_k,v) \dx \dv
\end{align}
and an application of Lemma \ref{lem:ind} shows 
\begin{align*}
    \sup_{t\in [0,\infty)} \iint_{\R^{2n}} H(f_{k+1},v)\dx\dv \le \iint_{\R^{2n}} H(f_0,v) \dx \dv .
\end{align*}
By definition this proves the $L^{1+2/d}$-estimate in $\eqref{eq : unif in k, ve}_1$ and also $\eqref{eq : unif in k, ve}_2$.
\end{proof}

\begin{proof}[Proof of Proposition \ref{prop : reg bgk}]
By the same calculations in \cite[Section 3.1.2]{choihwang2024} we verify as a result of \eqref{eq : max f g stab} that the sequence $\{f_k\}$ is Cauchy in $L^\infty_{\rm loc}([0,\infty);L^1_2(\R^{2n}))$. Hence, the $f_k$ converges in this space to a limit $f^\ve \in L^\infty_{\rm loc}([0,\infty);L^1_2(\R^{2n}))$. In view of the uniform in time estimates in \eqref{eq : unif in k, ve}, we deduce that $f^\ve \in L^\infty([0,\infty);L^1_2(\R^{2n}))$.

Also, by applying \eqref{eq : max f g stab}, we obtain the convergence $M^{(\ve)}[f_k]\to M^{(\ve)}[f^\ve]$ in $L^\infty_{\rm loc}([0,\infty);L^1_2(\R^{2n})).$ Indeed, for any $T>0$,
\begin{align*}
    &\sup_{0\le t \le T}\iint_{\R^{2n}}(1+|v|^2)\lt|M^{(\ve)}[f_k] - M^{(\ve)}[f^\ve]\rt| \dx\dv \\
    &\le \sup_{0\le t \le T} C(\ve) \iint_{\R^{2n}} (1+|v|^2) |f_k- f^\ve| \dx\dv \to 0\quad \text{as}\quad k\to\infty.
\end{align*}
The convergence of the Maxwellians implies that the integral arising in the mild form \eqref{eq : k mild form} converges as
\begin{align} \label{eq: conv Mk}
    \int_0^t e^{s-t} M^{(\ve)}[f_k](s,x-v(t-s),v) \ds \to \int_0^t e^{s-t} M^{(\ve)}[f^\ve](s,x-v(t-s),v) \ds 
\end{align}
in $L^\infty_{loc}([0,\infty);L^1_2(\R^{2n}))$. To see this, for each $T>0$ and $t\in [0,T]$, note that
\begin{align*}
& \iint_{\R^{2n}} (1+|v|^2) \left|\int_0^t e^{s-t} (M^{(\ve)}[f_k] - M^{(\ve)}[f^\ve])(s,x-v(t-s),v) \ds  \right|\dx\dv\\
    \le& \int_0^t \iint_{\R^{2n}} e^{s-t} (1+|v|^2) \left|(M^{(\ve)}[f_k] - M^{(\ve)}[f^\ve])(s,x-v(t-s),v)\right| \dx\dv\ds \\
    = & \int_0^t \iint_{\R^{2n}} e^{s-t} (1+|v|^2) \left| M^{(\ve)}[f_k](s,x,v) - M^{(\ve)}[f^\ve](s,x,v) \right| \dx\dv\ds \\
    \le &\lt(\int_0^t e^{s-t}\ds \rt)\left(\sup_{0\le s \le T} \iint_{\R^{2n}} (1+|v|^2)\left| M^{(\ve)}[f_k](s,x,v) - M^{(\ve)}[f^\ve](s,x,v) \right| \dx\dv \right)\\
   \le &\lt\|M^{(\ve)}[f_k] - M^{(\ve)}[f^\ve]\rt\|_{L^\infty(0,T;L^1_2(\R^{2n}))} \to 0 \quad \text{as $k\to\infty$}.
\end{align*}
Thus, we can pass to the limit in \eqref{eq : k mild form} to verify straightforwardly that $f^\ve$ is a mild solution to \eqref{eq : reg bgk}. The estimates asserted in \eqref{eq : reg bgk unif bdd} then follow directly from Lemma \ref{lem : iterative seq}.

Lastly it remains to establish $\eqref{eq : tightness}$. It follows as a direct consequence of Lemma \ref{lem:tight} below. Note that the condition \eqref{eq : collision sign} holds with $G^\ve = M^{(\ve)}[f^\ve] - f^\ve$ due to the relation $\rho_{f^\ve}^{(\ve)} \le \rho_{f^\ve}$ in \eqref{eq:keydif}.
\end{proof}

\begin{lemma}\label{lem:tight}
    Let $\{f^m\} \subset L^\infty([0,\infty);L^1_2(\R^{2n}))$ be a family of weak solutions to the transport equations
    \begin{align}
    \label{eq : transport m}
        \begin{cases}
        \p_t f^m + v\cdot \nabla_x f^m =G^m,\\
        f^m(0,\cdot,\cdot) = f_0(\cdot,\cdot)
        \end{cases}
    \end{align}
    where $G^m \in L^\infty([0,\infty);L^1(\R^{2n}))$ and
    \begin{align}
    \label{eq : collision sign}
        \int_{\R^n} G^m \dv \le 0.
    \end{align}
    Assume also
    \begin{equation*}
        \mathcal{E} := \sup_{m} \sup_{t\in [0,\infty)} \iint_{\R^{2n}} (1+|v|^2)f^m \dx \dv < \infty.
    \end{equation*}
    Then, if $\psi\in C^0(\R^n)$ verifies $|\psi(v)| \lesssim (1+|v|)^\sigma$
    for some $\sigma\in [0,2)$, the function
    \[
    \rho^m_\psi(t,x): = \intr f^m(t,x,v) \psi(v) \dv
    \]
    satisfies the estimate
\[
\sup_{m}\sup_{t \in [0,T]}\int_{|x|\ge 2R} \left|\rho_{\psi}^{m}(t,x)\right| \dx \le \mathcal{E}^{\frac{\sigma}{2}}\lt(\int_{|x|\ge R} \int_{\R^n} f_0 \dv \dx + \frac{2\mathcal{E}T}{R}\rt)^{1-\frac{\sigma}{2}}
\]  
for any $T,R>0$.
\end{lemma}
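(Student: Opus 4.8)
The plan is to combine two applications of Hölder's inequality (one in $v$, one in $x$) with a quantitative ``mass cannot escape to infinity too quickly'' bound for the transport equation; the latter is where the two structural hypotheses \eqref{eq : collision sign} and the energy bound $\mathcal{E}<\infty$ actually enter, and it is the only substantive point.

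\textbf{Step 1: reduction to control of the escaping mass.} Since $|\psi(v)|\lesssim (1+|v|)^\sigma \lesssim (1+|v|^2)^{\sigma/2}$ with $\sigma\in[0,2)$, Hölder's inequality in $v$ with conjugate exponents $\tfrac{2}{\sigma},\tfrac{2}{2-\sigma}$ (the case $\sigma=0$ being immediate) gives, for a.e.\ $(t,x)$,
\[
|\rho_\psi^m(t,x)|\lesssim\left(\intr(1+|v|^2)f^m(t,x,v)\dv\right)^{\sigma/2}\rho_{f^m}(t,x)^{1-\sigma/2}.
\]
Integrating over $\{|x|\ge 2R\}$ and applying Hölder in $x$ with the same exponents,
\[
\int_{|x|\ge 2R}|\rho_\psi^m(t,x)|\dx\lesssim\left(\iint_{\R^{2n}}(1+|v|^2)f^m\dx\dv\right)^{\sigma/2}\left(\int_{|x|\ge 2R}\rho_{f^m}(t,x)\dx\right)^{1-\sigma/2}\le\mathcal{E}^{\sigma/2}\left(\int_{|x|\ge 2R}\rho_{f^m}(t,x)\dx\right)^{1-\sigma/2}.
\]
Thus it suffices to prove the $m$- and $t$-independent bound $\displaystyle\int_{|x|\ge 2R}\rho_{f^m}(t,x)\dx\le\int_{|x|\ge R}\intr f_0\dv\dx+\tfrac{2\mathcal{E}T}{R}$ for every $m$ and every $t\in[0,T]$.

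\textbf{Step 2: the escaping-mass estimate.} Fix a Lipschitz cutoff $\phi_R:\R^n\to[0,1]$ with $\phi_R\equiv0$ on $\{|x|\le R\}$, $\phi_R\equiv1$ on $\{|x|\ge 2R\}$, and $\|\nabla\phi_R\|_{L^\infty}\le 2/R$. Testing the weak formulation of \eqref{eq : transport m} against $\phi_R(x)$ — legitimate after a routine cutoff of $\phi_R$ to compact support at radius $j\to\infty$, the extra gradient term vanishing because $\iint_{\R^{2n}}|v|f^m\dx\dv\le\mathcal{E}<\infty$ — and integrating by parts in $x$ gives
\[
\frac{\textnormal{d}}{\textnormal{d}t}\iint_{\R^{2n}}f^m\phi_R\dx\dv=\iint_{\R^{2n}}f^m\,v\cdot\nabla_x\phi_R\dx\dv+\int_{\R^n}\phi_R\left(\intr G^m\dv\right)\dx.
\]
The last term is $\le0$ by \eqref{eq : collision sign} since $\phi_R\ge0$, while the first is bounded by $\tfrac{2}{R}\iint_{\R^{2n}}|v|f^m\dx\dv\le\tfrac{2}{R}\iint_{\R^{2n}}(1+|v|^2)f^m\dx\dv\le\tfrac{2\mathcal{E}}{R}$, using $2|v|\le1+|v|^2$ and the definition of $\mathcal{E}$. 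Integrating in time over $[0,t]$ with $t\le T$ and using $\mathbf{1}_{\{|x|\ge 2R\}}\le\phi_R\le\mathbf{1}_{\{|x|\ge R\}}$,
\[
\int_{|x|\ge 2R}\rho_{f^m}(t,x)\dx\le\iint_{\R^{2n}}f^m(t)\phi_R\dx\dv\le\iint_{\R^{2n}}f_0\phi_R\dx\dv+\frac{2\mathcal{E}T}{R}\le\int_{|x|\ge R}\intr f_0\dv\dx+\frac{2\mathcal{E}T}{R},
\]
which is exactly the claimed bound. Inserting it into Step 1 completes the proof.

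\textbf{Main obstacle.} The only place any care is needed is Step 2: one must justify testing the weak transport equation against the non-compactly-supported, merely Lipschitz function $\phi_R$ (done by truncation together with the uniform $L^1_2$-bound), and one must use \emph{both} hypotheses — the sign condition \eqref{eq : collision sign} to discard the source term, and the energy bound to obtain the crucial $O(1/R)$ rate of mass leakage. Everything else is two clean applications of Hölder's inequality.
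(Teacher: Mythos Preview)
Your proof is correct and follows essentially the same approach as the paper: reduce to the case $\psi\equiv1$ by H\"older interpolation (the paper does this at the end rather than the beginning, but that is cosmetic), then test the transport equation against a spatial cutoff $\phi_R$ with $\|\nabla\phi_R\|_\infty\le 2/R$, discard the source term via \eqref{eq : collision sign}, and bound the drift term by $2\mathcal{E}/R$. The paper spells out the truncation step a bit more explicitly, using separate cutoffs $\eta_k(x)$ and $\eta_l(v)$ and working from the integrated-in-time weak formulation rather than the differential identity you wrote, but your remark that this is routine given the uniform $L^1_2$ bound is accurate.
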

\begin{proof}
We first demonstrate the case where $\psi(v) \equiv 1$. As will be seen soon, the general case follows by interpolation.
Choose a decreasing function $\xi \in C_c^\infty([0,\infty))$ satisfying $\|\xi'(r)\|_\infty \le 2$ and
\begin{align*}
\begin{cases}
    \xi(r) = 1  &  r \in [0,1],\\
    \xi(r) = 0  &   r \in [2,\infty),
    \end{cases}
\end{align*}
and set $\zeta(r):= 1- \xi(r)$. Then, for each $R>0$, define $\chi_R:\R^n\to\R$ with $\chi_R(x) = \zeta\left(\frac{|x|}{R}\right)$. Similarly, we set $\eta_k(\cdot):= \xi\lt(\frac{|\cdot|}{k}\rt)$ for $k>0$.

For each $k,l>R$, we use $\chi_R(x)\eta_k(x)\eta_l(v) \in C_c^\infty(\R^{2n})$ as a test function in the weak formulation of \eqref{eq : transport m}: for any $t \in [0,T]$,
%\begin{align*}
%    \frac{\textnormal{d}}{\textnormal{d}t}\iint_{\R^{2n}} f^m \chi_R(x) \dv\dx &= \iint_{\R^{2n}} v f^m \cdot \nabla_x \chi_R(x) \dv\dx + \intr \lt(\intr G^m \dv\rt)\chi_R(x)\dx \\
%    &\le \frac{2}{R} \iint_{\R^{2n}} (1+|v|^2)f^m \dv\dx
%\end{align*}
%thanks to the uniform estimates in \eqref{eq : reg bgk unif bdd}. 
%by using \eqref{eq : collision sign}. 
\begin{equation}\label{eq: form ftc}
\begin{split}
    &\iint_{\R^{2n}} f^m \chi_R(x)\eta_k(x)\eta_l(v)\dv\dx  \\
    &= \iint_{\R^{2n}} f_0 \chi_R(x)\eta_k(x)\eta_l(v)\dv\dx \\
    &\quad +  \int_0^t\iint_{\R^{2n}} v f^m \eta_l(v) \cdot \nabla_x (\chi_R(x)\eta_k(x)) \dv\dx \ds\\
    &\quad + \int_0^t\iint_{\R^{2n}} G^m  \chi_R(x)\eta_k(x) \eta_l(v)\dx\dv\ds \\
    &=: I + II + III.
    %\frac{2}{R}\int_0^t \iint_{\R^{2n}}(1+|v|^2)f^m\dv\dx\ds.
\end{split}
\end{equation}
Since $0 \le \eta_k(\cdot), \eta_l(\cdot) \le 1$, we first have that
\[
I \le \iint_{\R^{2n}} f_0 \chi_R(x)\dv\dx .
\]
The second term is majorized as
\[
II \le \frac{2}{R}\int_0^t \iint_{\R^{2n}} (1+|v|^2)f^m \dv\dx \le \frac{2\calE T}{R},
\]
thanks to the relations $\|\nabla_x\chi_R\|_{\infty} \le \frac{2}{R}$, $\|\nabla_x\eta_k\|_\infty \le \frac{2}{k}\le \frac{2}{R}$, and $|v| \le 1 + |v|^2$. For the last term, since $G^m \in L^\infty([0,\infty);L^1(\R^{2n}))$ and $\eta_l \nearrow 1$, we find from the dominated convergence theorem that:
\[
\lim_{l \to \infty} III =  \int_0^t\iint_{\R^{2n}}  G^m \chi_R(x)\eta_k(x)\dx\dv\ds \le 0,
\]
where the inequality follows from  \eqref{eq : collision sign} and non-negativity of $\chi_R(x) \eta_k(x)$.

For the left-hand side of \eqref{eq: form ftc}, we note that the monotone convergence theorem provides
\[
\lim_{k,l \to \infty} \iint_{\R^{2n}} f^m \chi_R(x)\eta_k(x)\eta_l(v)\dv\dx = \iint_{\R^{2n}} f^m \chi_R(x) \dv\dx.
\]
Thus, taking the limsup $k,l \to \infty$ in \eqref{eq: form ftc}, we obtain
\[
\iint_{\R^{2n}} f^m \chi_R(x) \dv\dx \le \iint_{\R^{2n}} f_0 \chi_R(x)\dv\dx + \frac{2\calE T}{R}.
\]
Finally, we employ the following relation
\begin{align*}
        \mathbf{1}_{|x|\ge 2R }(x) \le \chi_R(x) \le \mathbf{1}_{|x|\ge R} (x)
\end{align*}
to obtain
\begin{equation}\label{eq : mass tight}
\begin{split}
    \int_{|x|\ge 2R} \rho_{f^m} \dx \le \int_{|x|\ge R} \rho_{f_0}\dx + \frac{2\calE T}{R}.
    \end{split}
\end{equation}
The general case regarding $\rho_{\psi}^m$ with $\sigma \in [0,2)$ follows then by interpolation. Indeed applying H\"older's inequality we deduce that 
\begin{align*}
\int_{|x|\ge 2R} \left|\rho_{\psi}^{m} \right| \dx 
&\le  \lt(\iint_{\R^{2n}} (1+|v|^2) f^m \dv\dx\rt)^{\frac{\sigma}{2}} \lt(\int_{|x|\ge 2R} \rho_{f^{m}} \dx\rt)^{1- \frac{\sigma}{2}}  \\
&= \mathcal{E}^{\frac{\sigma}{2}} \lt(\int_{|x|\ge 2R} \rho_{f^{m}} \dx\rt)^{1- \frac{\sigma}{2}} 
\end{align*}
which combined with \eqref{eq : mass tight} completes the proof.
\end{proof}

\section{Proof of Theorem \ref{thm : main theorem 1} : Convergence Analysis} \label{sec:pss}

To obtain compactness of the macroscopic variables corresponding to $f^\ve$ we introduce a version of the velocity averaging lemma, reminiscent of \cite[Lemma 2.7]{karpermellettrivisa2013}. Here the given sequence is not assumed to be bounded in $L^\infty$ and more importantly the assumption on the spatial moment is replaced by a tightness condition.
\begin{proposition}
\label{Prop : Vel Avg Lem}
    Let $p\in (1,\infty)$, and $f^m$ solutions to the transport equations
    \begin{align*}
        \begin{cases}
        \p_t f^m + v \cdot \nabla_x f^m = G^m_+ - G^m_-, \\f^m(0,\cdot,\cdot) = f_0(\cdot,\cdot) \in L^p(\R^{2n})
        \end{cases}
    \end{align*}
    where $G^m_+, G^m_- \ge 0$. Assume the following:
    \begin{enumerate}
    \item $\{f^m\}$ is bounded in $L^\infty([0,\infty);L^p(\R^{2n}))$.
    \item $\{(1+|v|^2)f^m\}$ is bounded in $L^\infty([0,\infty);L^1(\R^{2n}))$.
    \item $\{G^m_+\}$ and $\{G^m_-\}$ are bounded in $L_{loc}^1([0,\infty);L^1(\R^{2n}))$.
    \item For each $T>0$, the family $\{\rho_{f^m}(t,\cdot)\}_{(m,t)\in\N\times[0,T]}$ is tight:
    \begin{align*}
    \lim_{R\to \infty} \sup_{0\le t \le T} \sup_{m} \int_{|x|\ge R} \rho_{f^m} \dx = 0.
    \end{align*}
    \end{enumerate}
    Then, for each $\psi \in C(\R^n)$ verifying $|\psi(v)| \lesssim (1+|v|)^\sigma$ with $\sigma\in [0,2)$, the sequence $\left\{\int_{\R^n}f^{m}\psi(v)\dv\right\}$ is relatively compact in $L_{loc}^1([0,\infty);L^1(\R^n))$.
\end{proposition}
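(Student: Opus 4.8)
The plan is to reduce, by a chain of approximations, to the case of a smooth compactly supported weight $\psi$ and an $L^\infty$-bounded family, and then to apply a velocity averaging lemma tailored to an $L^1$ right-hand side. I will use repeatedly the elementary principle that, in a complete metric space, a family uniformly approximable by relatively compact families is itself relatively compact (one checks total boundedness). First, by hypothesis $(2)$ one has $\int_{|v|>L} f^m|\psi|\,\dv \le C(1+L)^{\sigma-2}\sup_{m,t}\iint_{\R^{2n}}(1+|v|^2)f^m\,\dv\dx$, which tends to $0$ in $L^\infty_tL^1_x$ uniformly in $m$, reducing matters to $\psi$ with compact support; mollifying $\psi$ into $\psi_\delta\in C_c^\infty$ and using $|\int f^m(\psi-\psi_\delta)\,\dv|\le\|\psi-\psi_\delta\|_\infty\,\rho_{f^m}$ with $\rho_{f^m}$ bounded in $L^\infty_tL^1_x$ reduces to $\psi\in C_c^\infty(\R^n)$. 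Next, for $\lambda>0$ pick $\beta_\lambda\in C^1(\R)$ with $0\le\beta_\lambda'\le1$, $\beta_\lambda(s)=s$ on $[0,\lambda]$ and $\beta_\lambda$ constant on $[\lambda+1,\infty)$: then $\{\beta_\lambda(f^m)\}$ is bounded in $L^\infty$, solves the transport equation with right-hand side $\beta_\lambda'(f^m)(G^m_+-G^m_-)$ still bounded in $L^1_{\rm loc}$ (being dominated by $G^m_++G^m_-$) and tight since $0\le\beta_\lambda(f^m)\le f^m$, while the pointwise bound $0\le f^m-\beta_\lambda(f^m)\le\lambda^{1-p}(f^m)^p$ with hypothesis $(1)$ gives $\sup_m\big\|\int(f^m-\beta_\lambda(f^m))\psi\,\dv\big\|_{L^1((0,T)\times\R^n)}\le\|\psi\|_\infty\lambda^{1-p}T\sup_m\|f^m\|_{L^\infty_tL^p}^p\to0$ as $\lambda\to\infty$. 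By the total-boundedness principle it therefore suffices to prove the claim for each $\beta_\lambda(f^m)$, i.e. one may assume $\{f^m\}$ is bounded in $L^\infty$ (this is also where it becomes clear why $(1)$ must be stated with $p>1$ rather than $p=1$).

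With the family now bounded in $L^\infty$, relabel it $f^m$, with initial datum $f_0$ still fixed and in $L^1_2$, and split $f^m(t,x,v)=f_0(x-vt,v)+h^m(t,x,v)$. The first summand does not depend on $m$, so its velocity average is a single function in $L^1_{\rm loc}([0,\infty);L^1(\R^n))$ (using $f_0\in L^1_2$ and $|\psi|\lesssim 1+|v|^2$), hence trivially relatively compact. The remainder $h^m$ has zero initial datum, is bounded in $L^\infty$, satisfies $\p_t h^m+v\cdot\nabla_x h^m=G^m_+-G^m_-$ with right-hand side bounded in $L^1_{\rm loc}$, and is tight in $x$: this combines hypothesis $(4)$ with the tightness of $\int f_0(x-vt,v)\,\dv$, uniform for $t\in[0,T]$, obtained via the change of variables $y=x-vt$ and splitting according to whether $|y|$ or $|v|$ is large. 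I then invoke the averaging lemma in the following form: a bounded, equiintegrable and tight family solving a kinetic transport equation with right-hand side bounded in $L^1_{\rm loc}$ has velocity averages against $C_c^\infty$ weights that are relatively compact in $L^1_{\rm loc}([0,\infty);L^1(\R^n))$. This may be taken from the $L^1$ theory of velocity averaging, in the spirit of \cite{karpermellettrivisa2013}, or established directly: the averages $\int h^m\psi\,\dv$ are equicontinuous in $t$ (read off from $\p_t\int h^m\psi\,\dv=-\nabla_x\cdot\int v\,h^m\psi\,\dv+\int(G^m_+-G^m_-)\psi\,\dv$ and the bounds on $h^m$ and on the source) and equicontinuous in $x$ (combining the regularizing effect of the free transport group on velocity averages with the equiintegrability of $h^m$), whence relative compactness follows from the Riesz--Fr\'echet--Kolmogorov theorem. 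Unwinding the reductions completes the proof.

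The one genuinely delicate point is the averaging step. Because the source is assumed bounded only in $L^1_{\rm loc}$ --- below the threshold of the classical $L^p$ ($p>1$) averaging lemmas --- one cannot expect any Sobolev regularity of the velocity averages, so regularity must be replaced by equiintegrability of the sequence itself: after the renormalization in the first step $\{f^m\}$ is bounded in $L^\infty$, hence equiintegrable, and it is precisely this property that upgrades the (easy) weak compactness of the averages to strong compactness in $L^1_{\rm loc}$. Hypothesis $(4)$ enters exactly here as well, serving as the substitute for the usual second spatial moment bound and allowing one to pass from local to global compactness in the $x$-variable.
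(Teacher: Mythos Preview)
Your proof is correct at the level of its black-box invocations, and it differs from the paper's mainly in organization. The paper proceeds more directly: after cutting off $\psi$ in $v$ (your first reduction), it applies the DiPerna--Lions--Meyer averaging result straight to $f^m$---which is already bounded in $L^\infty_t L^p_{x,v}$ with $p>1$ and hence locally equiintegrable---to obtain compactness of $\rho_\psi^{m,r}$ in $L^1([0,T]\times B_R(0))$, then uses hypotheses $(2)$ and $(4)$ to pass from local to global in $x$ via a diagonal extraction. Your renormalization step $f^m\mapsto\beta_\lambda(f^m)$ and the subsequent splitting $f^m=f_0(x-vt,v)+h^m$ are both unnecessary once one cites the $L^p$/equiintegrable version of the averaging lemma directly; they are not wrong, just extra machinery. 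Your remark that the renormalization ``is also where it becomes clear why $(1)$ must be stated with $p>1$'' is fair, though the same necessity is visible in the paper's approach since the $L^1$ averaging lemma fails without equiintegrability.

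There is, however, one genuine soft spot in your write-up. Your alternative sketch of the averaging step via Riesz--Fr\'echet--Kolmogorov claims that equicontinuity in $t$ can be ``read off'' from the identity $\partial_t\int h^m\psi\,\dv=-\nabla_x\cdot\int v\,h^m\psi\,\dv+\int(G^m_+-G^m_-)\psi\,\dv$. This does not give $L^1_x$-equicontinuity in $t$: the divergence term is controlled only in a negative Sobolev space, not in $L^1_x$, so you cannot integrate it to bound $\|\rho_\psi^m(t+\tau)-\rho_\psi^m(t)\|_{L^1_x}$. What the equation actually gives is a bound on $\partial_t\rho_\psi^m$ in $L^1_t(W^{-1,1}_x+L^1_x)$, which together with the $x$-regularity from averaging yields compactness via an Aubin--Lions argument rather than Kolmogorov. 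Since you also (and primarily) invoke the averaging lemma as a known result in the spirit of \cite{karpermellettrivisa2013}, this does not invalidate your proof, but the ``or established directly'' sketch as written is not a proof.
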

\begin{proof}
We aim to prove that for each $\eta>0$ and $T>0$, it is possible to extract a subsequence (depending on $\eta$ and $T$) of
\[
\rho_{\psi}^m := \int_{\R^n} f^m(t,x,v) \psi(v) \dv
\]
satisfying
\begin{equation}\label{eq:Cauchy}
\limsup_{k,\ell \to \infty} \|\rho_{\psi}^{m_k} - \rho_{\psi}^{m_l}\|_{L^1([0,T]\times \R^n)} \le \eta.
\end{equation}
Suppose that we have proven this. By taking 
\[
\eta_j := \frac{1}{j} \to 0 \quad \text{and} \quad T_j:=j \to \infty,
\]
we can extract subsequences verifying \eqref{eq:Cauchy} recursively, then use a standard diagonal argument to finally pass to a subsequence which is Cauchy in $L^1([0,T]\times \R^n)$ for any $T>0$. The conclusion of the proposition would then follow, and hence it suffices to prove the above claim.

Let $\eta>0$ and $T>0$ be given. By interpolating the assumptions (2) and (4) as we did in Lemma \ref{lem:tight} we can fix $R>0$ satisfying
    \begin{equation}\label{eq:farfield}
    \sup_{m\in \mathbb{N}} \int_0^T\int_{|x|\ge R} \left| \rho_{\psi}^m \right| \dx  \dt \le \frac{\eta}{8}.
    \end{equation}
Consider next smooth cutoff functions
    \begin{align*}
        \Xi_r(v) = \begin{cases}
            1 & |v| \le r,\\
            0 & |v| \ge r+1
        \end{cases}
    \end{align*}
    and set
    \begin{align*}
        \rho_{\psi}^{m,r} := \int_{\R^n} f^m(t,x,v) \psi(v) \Xi_r(v) \dv.
    \end{align*}
By classical velocity averaging results \cite[Section III]{dipernalionsmeyer1991}, for each $\phi \in C_c^\infty(\R^n)$ the sequence $\left\{\int_{\R^n} f^m \phi(v)\dv\right\}$ is relatively compact in $L^1([0,T]\times B_R(0))$. As its direct application, for each $r>0$ there is a subsequence of $\{\rho_\psi^{m,r}\}_m$ (depending on $r$) that is convergent in $L^1([0,T]\times B_R(0))$, and for this subsequence clearly
\begin{equation*}
\limsup_{k,l\to \infty} \|\rho_\psi^{m_k,r} - \rho_\psi^{m_l,r}\|_{L^1([0,T]\times B_R(0))} \le \frac{\eta}{4}.
\end{equation*}
Thanks to \eqref{eq:farfield}, we deduce
\begin{align*}
    \limsup_{k,l\to\infty} \|\rho_\psi^{m_k,r} - \rho_\psi^{m_l,r}\|_{L^1([0,T]\times \R^n)}  \le \frac{\eta}{2}.
\end{align*}
On the other hand, we note that for any $r>0$
\begin{align*}
\intr |\rho_\psi^m - \rho_\psi^{m,r}| \, \dx &\le  \int_{\R^n}\int_{|v|\ge r} |f^m(t,x,v)\psi(v)| \, \dv \dx  \\
&\le \frac{C}{(1+r)^{2-\sigma}}\iint_{\R^{2n}} (1+|v|)^2 f^m \dv\dx.
\end{align*} 
Therefore, in view of the assumption (2) we can fix large $r = r(T) >0$ verifying
\[
\sup_{m\in \mathbb{N}} \|\rho_\psi^m - \rho_\psi^{m,r}\|_{L^1([0,T]\times \R^n)} \le \frac{\eta}{4}.
\]
Finally, along the subsequence corresponding to this fixed $r$
\begin{align*}
    &\limsup_{k,l\to\infty}\|\rho_{\psi}^{m_k} - \rho_{\psi}^{m_l}\|_{L^1([0,T]\times \R^n)}\\
    &\le \limsup_{k,l\to\infty} \Big(\|\rho_\psi^{m_k} - \rho_{\psi}^{m_k,r}\|_{L^1([0,T]\times \R^n)} + \|\rho_{\psi}^{m_k,r} - \rho_\psi^{m_l,r}\|_{L^1([0,T]\times \R^n)}\\
    &\qquad \qquad \qquad + \|\rho_\psi^{m_l,r} - \rho_\psi^{m_l}\|_{L^1([0,T]\times\R^n)}\Big)\\
    &\le \eta
\end{align*}
and hence \eqref{eq:Cauchy} is proven. This completes the proof.
\end{proof}

Now, Proposition \ref{prop : reg bgk} and the Dunford--Pettis theorem imply that there is a limit $f\in L^\infty([0,\infty); L^1(\R^{2n}))$ with
\begin{align*}
    f^\ve \weakto f \quad \text{in} \quad L^{1}_{\rm loc}([0,\infty);L^1(\R^{2n})).
\end{align*}
As a direct application of Proposition \ref{Prop : Vel Avg Lem} to the sequence $\{f^\ve\}$ there is a subsequence verifying
\begin{align*}
    \rho_{f^\ve} \to \rho_f, \quad \rho_{f^\ve}u_{f^\ve}\to \rho_f u_f \quad\text{strongly in}\quad L^1_{loc}([0,\infty);L^1(\R^n)).
\end{align*}
We can pass to a further subsequence and assume that the convergences above also hold a.e. in $[0,\infty)\times \R^n$. This moreover implies
\begin{align*}
    &\rho_{f^\ve}^{(\ve)} = \frac{\rho_{f^\ve}}{1 + \ve \rho_{f^\ve}} \to \rho_f \quad \text{a.e. in }[0,\infty)\times \R^n,\\
    &u_{f^\ve}^{(\ve)} = \frac{\rho_{f^\ve}u_{f^\ve}}{\rho_{f^\ve} + \ve(1+|\rho_{f^\ve}u_{f^\ve}|)} \to u_f \quad \text{a.e. in }\{(t,x)\in [0,\infty)\times\R^n :\rho_f(t,x)>0\}.
\end{align*}
As a consequence we deduce the strong convergence of the Maxwellians.
\begin{lemma}\label{lem:Msc}
    It holds that
    \begin{align*}
        M^{(\ve)}[f^\ve] \to M[f] \quad\text{in}\quad L^1_{loc}([0,\infty);L^1(\R^{2n})).
    \end{align*}
\end{lemma}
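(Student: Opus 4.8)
The assertion is, by Fubini's theorem, equivalent to showing that
\[
h^\ve(t,x) := \intr \bigl|M^{(\ve)}[f^\ve](t,x,v) - M[f](t,x,v)\bigr|\, \dv
\]
converges to $0$ in $L^1([0,T]\times\R^n)$ for every $T>0$, along the subsequence fixed above. The plan is to first prove that $h^\ve \to 0$ for a.e.\ $(t,x)$, and then upgrade this to $L^1$ convergence by the Vitali convergence theorem, using the domination $0 \le h^\ve \le \rho^{(\ve)}_{f^\ve} + \rho_f \le \rho_{f^\ve} + \rho_f$, which follows from $\intr M^{(\ve)}[f^\ve]\dv = \rho^{(\ve)}_{f^\ve}$, $\intr M[f]\dv = \rho_f$, the nonnegativity of the Maxwellians, and the first relation in \eqref{eq:keydif}.

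For the pointwise convergence we distinguish the sets $\{\rho_f>0\}$ and $\{\rho_f=0\}$. On $\{\rho_f=0\}$ we have $M[f] = M[0,u_f] \equiv 0$ directly from \eqref{def : M} (here $\gamma>1$ is used), so $h^\ve = \intr M^{(\ve)}[f^\ve]\dv = \rho^{(\ve)}_{f^\ve} \to \rho_f = 0$ a.e.\ there. On $\{\rho_f>0\}$ we invoke the stability estimate \eqref{eq:M:sta} with $\theta = 1$, applied with the pairs $(\rho^{(\ve)}_{f^\ve}, u^{(\ve)}_{f^\ve})$ and $(\rho_f, u_f)$:
\[
h^\ve \le \bigl|\rho^{(\ve)}_{f^\ve} - \rho_f\bigr| + C \,\min\{\rho^{(\ve)}_{f^\ve},\rho_f\}^{\,1-\frac{\gamma-1}{2}}\,\bigl|u^{(\ve)}_{f^\ve} - u_f\bigr|.
\]
Since $\min\{\rho^{(\ve)}_{f^\ve},\rho_f\} \le \rho_f(t,x) < \infty$ for a.e.\ $(t,x)$, and since $\rho^{(\ve)}_{f^\ve}\to\rho_f$ a.e.\ and $u^{(\ve)}_{f^\ve}\to u_f$ a.e.\ on $\{\rho_f>0\}$ (as recorded above), the right-hand side tends to $0$ a.e.\ on this set. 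Hence $h^\ve\to 0$ a.e.\ on $[0,\infty)\times\R^n$. This is the delicate step: the degeneracy set $\{\rho_f=0\}$ — on which $u^{(\ve)}_{f^\ve}$ need not converge — must be treated separately, and the coefficient in the stability estimate must be kept under control; the choice $\theta=1$ is precisely what bounds that coefficient by $\rho_f$, finite a.e., so that no $L^\infty$ bound on the macroscopic quantities is required.

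Finally, since $\rho_{f^\ve}\to\rho_f$ strongly in $L^1_{\rm loc}([0,\infty);L^1(\R^n))$, in particular in $L^1([0,T]\times\R^n)$ for each $T$, the dominating family $\{\rho_{f^\ve}+\rho_f\}_\ve$ is uniformly integrable and uniformly tight over $[0,T]\times\R^n$; by the domination above the same holds for $\{h^\ve\}_\ve$. Together with the a.e.\ convergence $h^\ve\to 0$, the Vitali convergence theorem yields $h^\ve\to 0$ in $L^1([0,T]\times\R^n)$ for every $T>0$, which is exactly the claimed convergence $M^{(\ve)}[f^\ve] \to M[f]$ in $L^1_{\rm loc}([0,\infty);L^1(\R^{2n}))$.
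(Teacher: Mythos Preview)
Your proof is correct and follows the same overall skeleton as the paper (split on $\{\rho_f>0\}$ versus $\{\rho_f=0\}$, then apply Vitali), but the implementation is genuinely different. The paper works directly in $(t,x,v)$: it uses the pointwise continuity of $(\rho,u)\mapsto M[\rho,u](v)$ to get $M^{(\ve)}[f^\ve]\to M[f]$ a.e.\ on $\{\rho_f>0\}\times\R^n$, then obtains uniform integrability from the $L^{1+2/d}$ bound on the Maxwellians and tightness from the separate estimate \eqref{eq : tightness}. You instead integrate out $v$ first and control $h^\ve(t,x)$ via the stability estimate \eqref{eq:M:sta}, after which both uniform integrability and tightness of $\{h^\ve\}$ follow in one stroke from the strong $L^1$ convergence $\rho_{f^\ve}\to\rho_f$, via the domination $h^\ve\le\rho_{f^\ve}+\rho_f$. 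Your route thus reuses Lemma~\ref{thm : theta} (which the paper otherwise employs only in building the approximate solutions) and avoids invoking the $L^{1+2/d}$ bound and the tightness result \eqref{eq : tightness} separately; the paper's route is slightly more direct in that it does not need the stability lemma at this stage. Both arguments are complete.
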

\begin{proof}
    For any $T>0$, we write
    \begin{align*}
        &\iiint_{[0,T]\times\R^{2n}} |M^{(\ve)}[f^\ve] - M[f]|\dx \dt \dv \\
        &\qquad = \left(\iiint_{\{\rho_f>0\}\times \R^n} + \iiint_{\{\rho_f = 0\}\times \R^n}\right) |M^{(\ve)}[f^\ve] - M[f]|\dx \dt \dv\\
        &\qquad = I^\ve + J^\ve.
    \end{align*}
    On the set $\{\rho_f > 0\}\subset [0,T]\times\R^n$ we have the pointwise convergence $M^{(\ve)}[f^\ve] \to M[f]$. Recall that the family $\{M^{(\ve)}[f^\ve]\}$ is bounded in both $L^\infty([0,\infty);L^1(\R^{2n}))$ and $L^\infty([0,\infty);L^{1+2/d}(\R^{2n}))$. Also, since
    \begin{align*}
        \int_{|x|\ge R}\int_{|v|\ge R} M^{(\ve)}[f^\ve] \dv \dx \le \int_{|x|\ge R}\intr M^{(\ve)}[f^\ve] \dv \dx &= \int_{|x|\ge R} \rho_{f^\ve}^{(\ve)} \dx\\
        &\le \int_{|x|\ge R} \rho_{f^\ve}\dx, 
    \end{align*}
    the family $\{M^{(\ve)}[f^\ve]\}$ is tight thanks to \eqref{eq : tightness}. Altogether we deduce that the Vitali convergence theorem is applicable and $I^\ve\to 0$ as $\ve \to 0$.

    For $J^\ve$ we simply note that $M[f] = 0$ on the set $\{\rho_f = 0\}$, hence
    \begin{align*}
        J^\ve = \iiint_{\{\rho_f = 0\}\times \R^n} M^{(\ve)}[f^\ve] \dx \dv \dt = \iint_{\{\rho_f = 0\}} \rho_{f^\ve}^{(\ve)} \dx \dt &\le \iint_{\{\rho_f = 0\}} \rho_{f^\ve} \dx \dt\\
        &\to \iint_{\{\rho_f = 0\}} \rho_f \dx \dt = 0
    \end{align*}
    thanks to the strong convergence $\rho_{f^\ve}\to \rho_f$ in $L^1([0,T]\times\R^n)$.  
\end{proof}

We then obtain strong convergence of the sequence $\{f^\ve\}$ itself.

\begin{lemma}
\label{lem : conv of fve}
    It holds that
    \begin{align*}
        f^\ve \to f \quad\text{in}\quad L^\infty_{\rm loc}([0,\infty);L^1(\R^{2n})).
    \end{align*}
    In particular, the limit $f$ verifies for any $t\in [0,\infty)$
    \begin{align*}
        f(t,x,v) = e^{-t} f_0(x-vt,v) + \int_0^t e^{s-t}M[f](t,x-v(t-s),v) \ds
    \end{align*}
    and is thus a mild solution to \eqref{BGK} in the sense of Definition \ref{def:mild}.
\end{lemma}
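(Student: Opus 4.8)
The plan is to exploit the structural fact, already anticipated in the strategy section, that the mild formulation \eqref{eq: mild form ve} writes $f^\ve$ as an affine function of the Maxwellian whose linear part is an $L^1$-isometry; consequently the strong convergence of the Maxwellians from Lemma \ref{lem:Msc} transfers \emph{verbatim} to strong convergence of $f^\ve$. To this end I first introduce the candidate limit
\[
\tilde f(t,x,v) := e^{-t}f_0(x-vt,v) + \int_0^t e^{s-t}M[f](s,x-v(t-s),v)\,\ds .
\]
This is well defined and lies in $L^\infty_{loc}([0,\infty);L^1(\R^{2n}))$: indeed $M[f]\in L^1_{loc}([0,\infty);L^1(\R^{2n}))$ by Lemma \ref{lem:Msc}, and for fixed $0\le s\le t$ the shear map $(x,v)\mapsto(x-v(t-s),v)$ is a measure-preserving bijection of $\R^{2n}$, so Tonelli's theorem gives $\int_0^t e^{s-t}\|M[f](s)\|_{L^1(\R^{2n})}\,\ds<\infty$.

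Next I subtract the mild forms of $f^\ve$ and $\tilde f$; the common term $e^{-t}f_0(x-vt,v)$ cancels, leaving
\[
f^\ve(t,x,v)-\tilde f(t,x,v) = \int_0^t e^{s-t}\big(M^{(\ve)}[f^\ve]-M[f]\big)(s,x-v(t-s),v)\,\ds .
\]
Integrating the absolute value in $(x,v)$, using Tonelli and the measure-preserving change of variables as above together with $e^{s-t}\le 1$ and $t\le T$, I obtain
\[
\sup_{0\le t\le T}\iint_{\R^{2n}}\big|f^\ve(t,x,v)-\tilde f(t,x,v)\big|\,\dx\dv \le \int_0^T\iint_{\R^{2n}}\big|M^{(\ve)}[f^\ve]-M[f]\big|\,\dx\dv\,\ds .
\]
By Lemma \ref{lem:Msc} the right-hand side tends to $0$ as $\ve\to 0$, so $f^\ve\to\tilde f$ in $L^\infty_{loc}([0,\infty);L^1(\R^{2n}))$.

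It remains to identify $\tilde f$ with $f$. Strong convergence in $L^\infty_{loc}([0,\infty);L^1(\R^{2n}))$ implies convergence in $L^1_{loc}([0,\infty);L^1(\R^{2n}))$, hence weak $L^1_{loc}$ convergence; since we already know $f^\ve\weakto f$ in $L^1_{loc}([0,\infty);L^1(\R^{2n}))$, uniqueness of weak limits forces $f=\tilde f$ a.e. In particular $f$ satisfies the mild identity in the statement. Finally, to see that $f$ is a mild solution in the sense of Definition \ref{def:mild} I check \eqref{eq : f prop}: the bounds $f\in L^\infty([0,\infty);L^1\cap L^{1+2/d}(\R^{2n}))$ pass from \eqref{eq : reg bgk unif bdd} to the limit by weak lower semicontinuity of the norms, and the $|v|^2$-weighted estimate follows from Fatou's lemma along a subsequence of $f^\ve$ converging a.e.\ (available now that $f^\ve\to f$ strongly in $L^1_{loc}$). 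I do not expect a genuine obstacle here: the only points needing mild care are the measure-preserving change of variables that turns the transport flow into an $L^1$-isometry, and the (already established) integrability of $M[f]$ used to make sense of $\tilde f$.
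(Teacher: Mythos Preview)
Your proof is correct and follows essentially the same route as the paper: both subtract the mild formulae, use the measure-preserving shear to bound the difference by $\int_0^T\|M^{(\ve)}[f^\ve]-M[f]\|_{L^1}\,\ds$, and invoke Lemma~\ref{lem:Msc}. You are merely more explicit about the identification $\tilde f=f$ via uniqueness of weak limits (the paper asserts this equality without comment), and about how the bounds in \eqref{eq : f prop} pass to the limit; these clarifications are welcome but do not constitute a different approach.
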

\begin{proof}
Thanks to the strong convergence of the Maxwellians verified in the previous lemma, we readily verify that
\begin{align*}
    \int_0^t e^{s-t} M^{(\ve)}[f^\ve](s,x-v(t-s),v)\ds \to \int_0^t e^{s-t} M[f](s,x-v(t-s),v)\ds 
\end{align*}
in $L^\infty_{loc}([0,\infty);L^1(\R^{2n}))$. Indeed:
\begin{align*}
    &\sup_{0\le t \le T} \iint_{\R^{2n}}\left|\int_0^t e^{s-t} (M^{(\ve)}[f^\ve] - M[f])(s,x-v(t-s),v) \ds\right| \dx\dv \\
    &\le \sup_{0\le t \le T} \int_0^t \iint_{\R^{2n}}  |M^{(\ve)}[f^\ve] - M[f]|(s,x-v(t-s),v) \dx\dv\ds \\
    &\le \int_0^T \iint_{\R^{2n}}  |M^{(\ve)}[f^\ve] - M[f]|(s,x,v) \dx\dv\ds \to 0 \quad \forall \, T>0.
\end{align*}
Hence, passing to the limit in \eqref{eq: mild form ve}, we obtain
\begin{align*}
    f^\ve (t,x,v) &\to e^{-t}f_0(x-vt,v) + \int_0^t e^{s-t} M[f](s,x-v(t-s),v) \ds\\
    &\quad = f(t,x,v) \quad \text{in} \quad L^\infty(0,T;L^1(\R^{2n})) \quad \forall \, T>0.
\end{align*}
As this strong convergence holds for any $T>0$, we recall the uniform in time estimates provided in \eqref{eq : reg bgk unif bdd} to deduce that $f \in L^\infty([0,\infty);L^1_2 \cap L^{1+2/d} (\R^{2n}))$. It is then straightforward that $f$ is a mild solution to the BGK equation \eqref{BGK} in the sense of Definition \ref{def:mild}.
\end{proof}

\section{Proof of Theorem \ref{thm : main theorem 1} : Kinetic Entropy Inequality} \label{sec:kei} 

To complete the proof of Theorem \ref{thm : main theorem 1}, it only remains to show that the kinetic entropy inequality \eqref{eq : kin entrop ineq} holds.\\
We need to recover previous estimates from the linearized level. Namely from \eqref{eq : d dt H f_k+1}
\begin{align*}
    \frac{\textnormal{d}}{\textnormal{d}t}\iint_{\R^{2n}}H(f_{k+1},v)\dx\dv + \iint_{\R^{2n}} H(f_{k+1},v)\dx\dv \le \iint_{\R^{2n}} H(M^{(\ve)}[f_k],v)\dx\dv.
\end{align*}
Hence,
\begin{equation}
\begin{split}
\label{eq : kin entrop ineq, lin level}
    \iint_{\R^{2n}} H(f_{k+1},v)\dx \dv &+ \int_0^t \iint_{\R^{2n}} H(f_{k+1},v)-H(M^{(\ve)}[f_k],v) \dx \dv \\
    &\qquad \le \iint_{\R^{2n}}H(f_0,v)\dx\dv. 
    \end{split}
\end{equation}
Recall that $f_{k}\to f^\ve$ strongly in $L^\infty(0,T;L^1_2(\R^{2n}))$ for any $T>0$. In view of the uniform estimates in \eqref{eq : unif in k, ve} this also implies that for each $t\in [0,T]$ the sequence $\{f_k(t)\}$ converges weakly in $L^{1+2/d}(\R^{2n})$ to $f^\ve(t)$. By the strong convergence in $L^1_2(\R^{2n})$ and lower semicontinuity of the norm under weak convergence, we deduce that for all $t\in[0,T]$
\begin{align*}
    \iint_{\R^{2n}} H(f^\ve,v)\dx\dv \le \liminf_{k\to\infty} \iint_{\R^{2n}} H(f_{k+1},v)\dx\dv.
\end{align*}
For the Maxwellian, we use \eqref{eq: comp cond} and \eqref{eq : diff of reg quantities} to note that for any $g,h\in L^1_2(\R^{2n})$ it holds:
\begin{align*}
    &\left|\iint_{\R^{2n}} H(M^{(\ve)}[g],v) - H(M^{(\ve)}[h],v) \,\dx\dv \right|\\
    &= \left|\int_{\R^n} \frac{1}{2}\rho^{(\ve)}_{g}|u^{(\ve)}_{g}|^2 - \frac{1}{2}\rho_{h}^{(\ve)}|u_{h}^{(\ve)}|^2 + \frac{1}{\gamma-1}(\rho_{g}^{(\ve)})^\gamma - \frac{1}{\gamma-1}(\rho_{h}^{(\ve)})^\gamma \, \dx \right| \\
    &= \left|\int_{\R^n} \frac{1}{2}(\rho_{g}^{(\ve)} - \rho_{h}^{(\ve)}) |u_{g}^{(\ve)}|^2 + \rho_{h}^{(\ve)} (u_{g}^{(\ve)} + u_{h}^{(\ve)})\cdot (u_{g}^{(\ve)} - u_{h}^{(\ve)}) \, \dx \right|\\
    &\qquad + \frac{1}{\gamma-1}\left|\int_{\R^n} (\rho_{g}^{(\ve)})^\gamma - (\rho_{h}^{(\ve)})^\gamma \, \dx \right|\\
    &\le \frac{1}{2\ve^2}\int_{\R^n} |\rho_g^{(\ve)} - \rho_h^{(\ve)}| \dx + \frac{2}{\ve^2}\int_{\R^n} |u_g^{(\ve)} - u_h^{(\ve)}| \dx + \frac{C\gamma}{\ve^{\gamma-1}(\gamma-1)} \int_{\R^n} |\rho_g^{(\ve)} - \rho_h^{(\ve)}| \dx \\
    &\le \left(\frac{1}{2\ve^2} + \frac{2}{\ve^4} + \frac{C\gamma}{\ve^{\gamma-1}(\gamma-1)}\right) \int_{\R^n} |\rho_g - \rho_h| \dx + \frac{2}{\ve^3}\int_{\R^n} |\rho_g u_g - \rho_h u_h| \dx \\
    &\le C(\ve,\gamma) \iint_{\R^n} (1+|v|^2) |g - h| \,\dx\dv.
\end{align*}
Above, the integral at the third line was estimated using the upper bounds in \eqref{eq : diff of reg quantities}: $\rho_g^{(\ve)},\rho_h^{(\ve)},|u_g^{(\ve)}|,|u_h^{(\ve)}|\le 1/\ve$. The integral at the fourth line was estimated by first applying the mean-value theorem to the function $z\mapsto z^\gamma$, then using again the upper bounds for $\rho_g^{(\ve)}$ and $\rho_h^{(\ve)}$. The sixth line is a consequence of the stability estimates in \eqref{eq : diff of reg quantities}.

Consequently, the convergence $f_k\to f^\ve$ in $L^\infty(0,T;L^1_2(\R^{2n}))$ implies
\begin{align*}
    \sup_{0\le t \le T} \left|\iint_{\R^{2n}} H(M^{(\ve)}[f_k],v)\dx\dv - \iint_{\R^{2n}} H(M^{(\ve)}[f^\ve],v)\dx\dv \right| \to 0 \quad\text{as}\quad k\to\infty.
\end{align*}
Together, we can then deduce that \eqref{eq : kin entrop ineq, lin level} implies for all $t\in [0,T]$
\begin{equation}
\label{eq : kin entrop ineq reg level}
\begin{split}
    &\iint_{\R^{2n}} H(f^\ve,v)\dx\dv + \int_0^t \iint_{\R^{2n}} H(f^\ve,v)-H(M^{(\ve)}[f^\ve],v)\dx\dv\\
    &\quad \le \iint_{\R^{2n}} H(f_0,v)\dx\dv.
    \end{split}
\end{equation}
We need now pass to the limit $\ve\to 0$. First, note that Proposition \ref{prop : reg bgk} and the Dunford Pettis theorem imply $f^\ve(t) \weakto f(t)$ weakly in $L^1(\R^{2n})$ for all $t\in [0,T]$, since the limit is already identified by Lemma \ref{lem : conv of fve}. This implies that for any $R>0$
\begin{equation*}
\begin{split}
    \iint_{\R^{2n}} |v|^2 \mathbf{1}_{|v|\le R} f\, \dx \dv &= \lim_{\ve \to 0} \iint_{\R^{2n}} |v|^2 \mathbf{1}_{|v|\le R} f^\ve(t) \dx \dv \\
    &\le \liminf_{\ve \to 0} \iint_{\R^{2n}} |v|^2 f^\ve(t) \dx \dv.
    \end{split}
\end{equation*}
Since this holds for all $R>0$ applying the monotone convergence theorem to the left-hand-side yields
\begin{align*}
    \iint_{\R^{2n}}|v|^2 f(t)\, \dx \dv \le \liminf_{\ve \to 0} \iint_{\R^{2n}} |v|^2 f^\ve(t) \dx \dv.
\end{align*}
Also, we have that $f^\ve(t) \weakto f(t)$ weakly in $L^{1+2/d}(\R^{2n})$. Thus the lower semicontinuity of the norm under weak convergence implies
\begin{equation*}
    \iint_{\R^{2n}} f^{\frac{d+2}{d}}\dx\dv \le \liminf_{\ve\to0}\iint_{\R^{2n}} (f^\ve)^{\frac{d+2}{d}}\dx\dv.
\end{equation*}
Hence
\begin{equation}
\label{eq : kin entrop lsc}
    \iint_{\R^{2n}} H(f,v)\dx\dv \le \liminf_{\ve\to0}\iint_{\R^{2n}} H(f^\ve,v)\dx\dv.
\end{equation}
Second, let us show that
\begin{equation}
\label{eq : diss lsc}
\begin{split}
    &\int_0^t \iint_{\R^{2n}} H(f,v)-H(M[f],v) \dx \dv \ds\\
    &\le \liminf_{\ve\to 0} \int_0^t \iint_{\R^{2n}} H(f^\ve,v) - H(M^{(\ve)}[f^\ve],v) \dx \dv \ds.
\end{split}
\end{equation}
Note as a preliminary step that for any $\delta>0$, thanks to the pointwise convergences $\rho_{f^\ve} \to \rho_f$ and $\rho_{f^\ve}u_{f^\ve} \to \rho_f u_f$
\begin{equation}
\label{eq : ptwise conv HMe}
\begin{split}
    \int_{\R^n} H(M[f^\ve],v)\dv &= \frac{1}{2} \rho_{f^\ve} |u_{f^\ve}|^2 + \frac{1}{\gamma-1}(\rho_{f^\ve})^\gamma \\
    &\to \frac{1}{2}\rho_f |u_f|^2 + \frac{1}{\gamma-1}(\rho_f)^\gamma \\
    &= \int_{\R^n} H(M[f],v)\dv \quad\text{a.e. in }\{(t,x):\rho_f(t,x)>\delta\} \subset [0,T]\times \R^n.
    \end{split}
\end{equation}
From \eqref{eq: comp cond} and \eqref{eq:keydif} we recall again that $\int_{\R^n} H(M^{(\ve)}[f^\ve],v)\dv \le \int_{\R^n} H(M[f^\ve],v)\dv$. Henceforth for any $t\in [0,T]$ and with $A_\delta := \{(s,x):\rho_f(s,x)>\delta\}\subset [0,t]\times \T^n$,
{\allowdisplaybreaks
\begin{align*}
    &\liminf_{\ve \to 0} \int_0^t \intr\intr H(f^\ve,v) - H(M^{(\ve)}[f^\ve],v) \dv \dx \ds\\
    &\ge \liminf_{\ve \to 0} \int_0^t \intr\lt(\intr H(f^\ve,v) - H(M[f^\ve],v) \dv\rt) \dx \ds\\
    &\ge \iint_{A_\delta} \liminf_{\ve \to 0} \left( \int_{\R^n} H(f^\ve,v) - H(M[f^\ve],v) \dv \right) \dx \ds \\
    &= \iint_{A_\delta} \left( \liminf_{\ve \to 0} \int_{\R^n} H(f^\ve,v) \dv  - \int_{\R^n} H(M[f],v) \dv \right) \dx \ds  \\
    &\ge \iint_{A_\delta} \left( \int_{\R^n} H(f,v) - H(M[f],v) \dv\right) \dx \ds.
\end{align*}}
Indeed at the second line above the integrand is non-negative thanks to the minimization principle. Thus the inequality at the third line holds by monotonicity of the integral and then Fatou's lemma. The equality following it is due to \eqref{eq : ptwise conv HMe}. The last inequality follows by Fatou's Lemma since by Lemma \ref{lem : conv of fve} we may assume $f^\ve\to f$ pointwise.

Since this holds for all $\delta>0$ we can let $\delta \to 0$ by applying the dominated convergence theorem. This yields
\begin{equation}
\label{eq : rho>0}
\begin{split}
    &\iint_{\{\rho_f>0\}}\int_{\R^n} H(f,v)-H(M[f],v) \dv \dx \ds \\
    &\quad \le \liminf_{\ve \to 0} \int_0^t \iint_{\R^{2n}} H(f^\ve,v)-H(M^{(\ve)}[f^\ve],v) \dx\dv\ds,
    \end{split}
\end{equation}
where we have denoted $\{\rho_f>0\} := \{(s,x):\rho_f(s,x)>0\}\subset [0,t]\times\R^n$. On the other hand it is clear that $f=0$ a.e. on $\{\rho_f = 0\}\times \R^n$. This and the minimization principle show
\begin{align*}
    0 = \iint_{\{\rho_f = 0\}} \int_{\R^n} H(f,v) \dv \dx \ds = \iint_{\{\rho_f = 0\}} \int_{\R^n} H(M[f],v) \dv \dx \ds,
\end{align*}
and we conclude that \eqref{eq : rho>0} implies \eqref{eq : diss lsc}. Altogether \eqref{eq : kin entrop lsc} and \eqref{eq : diss lsc} show that in the limit $\ve \to 0$, \eqref{eq : kin entrop ineq reg level} yields
\begin{align*}
    \iint_{\R^{2n}} H(f,v) \dx \dv + \int_0^t \iint_{\R^{2n}} H(f,v) - H(M[f],v)\dx\dv\ds \le \iint_{\R^{2n}} H(f_0,v)\dx \dv
\end{align*}
for all $t\in [0,T]$. As $T>0$ is arbitrary we obtain precisely \eqref{eq : kin entrop ineq} for the case $\tau=1$. Thus $f$ is indeed a mild entropy solution to \eqref{BGK}, and this completes the proof of Theorem \ref{thm : main theorem 1}.

\appendix

\section{Remarks on the Minimization Principle for the End-Point Case}\label{rem:countex}

In this section we discuss the minimization principle for the end-point case $\gamma = \frac{n+2}{n}$, focusing on why the infinity term in \eqref{eq : kin entrop ineq} is necessary. First, we make the following observation. As $\gamma \to \frac{n+2}{n}{-}$, we have $d \to {0+}$. The kinetic entropy in \eqref{eq:kin:ent}${}_1$ is then seen to converge formally to
\[
H(f,v):= \frac{1}{2}|v|^2f + \infty \cdot {\bf1}_{f>c_2},
\]
which coincides with $\eqref{eq:kin:ent}_2$.

To elaborate on this matter, we give our interpretation as to why the Maxwellian should have the form that it does. Note that the minimization principle can be understood from a probabilistic point of view. By normalizing $f$, we may assume $\rho_f = 1$ so that $f$ can be understood as a probability measure on $\R^n_v$. Then $u_f$ can be seen as the corresponding expectation, so we may translate $f$ suitably so that $u_f =0$. Similarly the quantity
\[
H(f,v) = \frac{1}{2}|v|^2 f
\]
can be interpreted as the variance corresponding to $f$. Characterizing a minimizer of this quantity thus is to find a centered probability distribution with minimal variance under a constraint that the density is bounded by $c_2$. For any $f$, by rearranging the distribution $f$ to a radial one, this problem need only be addressed in a one-dimensional setting. Hence, we can easily deduce that the minimizer must be a radial characteristic function with height $c_2$, which is precisely the definition of the Maxwellian for the case $\gamma=\frac{n+2}{n}$. This argument easily generalizes to any $\rho_f$ and $u_f$ as well. 

Indeed, the infinity term in \eqref{eq:kin:ent} is necessary for the minimization principle to hold. For instance, if we choose
\[
f(v):= a \cdot {\bf 1}_{|v| \le r}
\]
with $a>c_2$ and $r>0$, then obviously
\[
\rho_f = ar^n\frac{\pi^{n/2}}{\Gamma(\frac{n}{2}+1)},\quad \rho_fu_f=0.
\]
Note that $\frac{\pi^{n/2}}{\Gamma(\frac{n}{2}+1)}$ is the volume of the unit ball in $n$ dimensions. After a direct calculation one easily checks that
\[
\intr |v|^2 f \dv =\lt(\frac{\pi^{n/2}}{\Gamma(\frac{n}{2}+1)}r^{n+2}a\rt) \frac{n}{n+2}.
\]
Now
\[
a > c_2 = \frac{\Gamma(\frac{n}{2}+1)}{(\pi \kappa (n+2))^{n/2}}
\]
so \eqref{eq: comp cond} implies
\begin{equation*}
\begin{split}
\intr |v|^2 M_f \dv =\kappa n \rho_f^{1+\frac{2}{n}} &=  \lt(\frac{\pi^{n/2}}{\Gamma(\frac{n}{2}+1)}r^{n+2}a\rt) \frac{\pi \kappa n a^{2/n}}{\Gamma(\frac{n}{2}+1)^{2/n}} \\
&>\lt(\frac{\pi^{n/2}}{\Gamma(\frac{n}{2}+1)}r^{n+2}a\rt) \frac{n}{n+2},
\end{split}
\end{equation*}
which violates \eqref{eq : min principle} if the infinity term in \eqref{eq:kin:ent} is not present. Note also that this $f\le c_2$ condition\footnote{Actually, the condition $f \le 1$ was imposed in \cite[Proposition 4.1]{berthelinvasseur2005}, but this is simply because our definitions of the Maxwellian differ by a constant factor (see Remark \ref{rem : max const factor}). There is no problem in modifying the arguments of \cite{berthelinvasseur2005} so that with our definition \eqref{def : M}, \cite[Proposition 4.1]{berthelinvasseur2005} holds for $f \le c_2$.} is crucially imposed in \cite[Proposition 4.1]{berthelinvasseur2005}, which is an important step in verifying the hydrodynamic limit.

\section{Proof of Lemma \ref{lem:dis}} \label{sec : proof of lem dis}
We present here a proof of Lemma \ref{lem:dis}. We first recall the following result regarding the Maxwellian of the second model \eqref{eq : ext mod}.
\begin{lemma}\cite[Proposition 4.6]{berthelinvasseur2005}
\label{lem b3} For $\gamma\in\left(1,\frac{n+2}{n}\right)$ and every $\ov{f} = \ov{f}(v,I)$ satisfying
\begin{align*}
\begin{cases}
(1+|v|^2+I^2)\ov{f}\in L^1(\R^n\times[0,\infty);c_0 I^{d-1} \dv \di),\\
0 \le \ov{f} \le c_3,
\end{cases}
\end{align*}
there exists $C=C_{n,\gamma}>0$ such that the following inequality
\begin{equation*}
    \int_{\R^n} \int_0^\infty (|v|^2+I^2) |\overline{f}(v,I) - \overline{M}[\ov{f}](v,I)| c_0 I^{d-1}\di\dv \le C\left\{(\rho_{\overline{f}})^{\frac{\gamma}{2}} \sqrt{\ov{D}_{\ov f}} + \ov{D}_{\ov f} \right\}
\end{equation*}
holds, with 
\[
    \ov{D}_{\ov f} := \displaystyle\int_{\R^n} \int_0^\infty \frac{1}{2}(|v|^2+I^2)(\overline{f}(v,I) - \overline{M}[\ov{f}](v,I)) c_0 I^{d-1} \textnormal{d}I \dv.
\]
\end{lemma}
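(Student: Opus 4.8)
The plan is to use that $\ov M[\ov f]$ is a scaled indicator of a ``ball'' in the weighted half-space $X:=\R^n\times[0,\infty)$ carrying the measure $\mu:=c_0I^{d-1}\,\di\,\dv$, and to run a rearrangement (bathtub) argument. Write $\rho:=\rho_{\ov f}$ and $u:=u_{\ov f}$; then $\ov M[\ov f]=c_3\mathbf 1_\Omega$ with $\Omega=\{(v,I)\in X:\ |v-u|^2+I^2<r^2\}$ and $r^2=c_1\rho^{\gamma-1}$, and the choice of $c_0,c_1,c_3$ is exactly what makes $\mu(\{|v-u|^2+I^2<s^2\})=\omega s^{N}$ and $\int_{\{|v-u|^2+I^2<s^2\}}(|v-u|^2+I^2)\,\dmu=\tfrac{N}{N+2}\omega s^{N+2}$, where $N:=n+d=\tfrac2{\gamma-1}$ and $\omega=\omega(n,\gamma)>0$; in particular $\rho=c_3\omega r^N$. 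Since $\int(1,v)(\ov f-\ov M[\ov f])\,\dmu=0$, the quantity $\ov D_{\ov f}$ is unchanged under translations of $\ov f$ in $v$, so I would first reduce to $u=0$ by translating (this is also how the lemma is used downstream, where the macroscopic velocity is bounded). With $W:=|v|^2+I^2$, decompose $|\ov f-\ov M[\ov f]|=(c_3-\ov f)\mathbf 1_\Omega+\ov f\,\mathbf 1_{\Omega^c}$ and set $P:=\int W\,\ov f\,\mathbf 1_{\Omega^c}\,\dmu$ and $Q:=\int W\,(c_3-\ov f)\,\mathbf 1_{\Omega}\,\dmu$. Then $2\ov D_{\ov f}=P-Q$, while the left‑hand side of the lemma equals $P+Q=2\ov D_{\ov f}+2Q$; hence everything reduces to an upper bound for $Q$.

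The heart of the matter is a lower bound on $P$, and this is precisely where the hypothesis $0\le\ov f\le c_3$ is used. Put $m:=\int_{\Omega^c}\ov f\,\dmu$; by mass conservation $\int\ov f\,\dmu=\int\ov M[\ov f]\,\dmu=\rho$ this equals $\int_\Omega(c_3-\ov f)\,\dmu$, and also $m\le\int_\Omega c_3\,\dmu=\rho$. Since $W<r^2$ on $\Omega$, $Q\le r^2m$. For $P$: among all $g$ with $0\le g\le c_3$, $\mathrm{supp}\,g\subset\Omega^c$ and $\int g\,\dmu=m$, the integral $\int Wg\,\dmu$ is minimized by $g=c_3\mathbf 1_{\{r^2\le W<\tilde r^{\,2}\}}$ (bathtub principle; $W$ has the level‑set structure of a squared radius and $\Omega^c=\{W\ge r^2\}$), where $\tilde r\ge r$ is fixed by $c_3\omega(\tilde r^{\,N}-r^N)=m$. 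Therefore
\[
P\ \ge\ c_3\omega\,\tfrac{N}{N+2}\bigl(\tilde r^{\,N+2}-r^{N+2}\bigr),
\]
and subtracting $Q\le r^2m=c_3\omega r^2(\tilde r^{\,N}-r^N)$ gives, with $t:=\tilde r/r\ge1$,
\[
2\ov D_{\ov f}\ =\ P-Q\ \ge\ c_3\omega N\,r^{N+2}\,\psi(t),\qquad \psi(t):=\frac{t^{N+2}-1}{N+2}-\frac{t^N-1}{N}.
\]
A direct computation gives $\psi(1)=\psi'(1)=0$ and $\psi''(t)=t^{N-2}\bigl((N+1)t^2-(N-1)\bigr)\ge2$ for $t\ge1$, hence $\psi(t)\ge(t-1)^2$.

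To conclude, note $t^N=1+m/(c_3\omega r^N)=1+m/\rho\le 2$, so $t\in[1,2^{1/N}]$ and
\[
t^N-1\ \le\ N2^{N-1}(t-1)\ \le\ N2^{N-1}\sqrt{\psi(t)}\ \le\ N2^{N-1}\Bigl(\tfrac{2\ov D_{\ov f}}{c_3\omega N\,r^{N+2}}\Bigr)^{1/2}.
\]
Hence $Q\le r^2m=c_3\omega r^{N+2}(t^N-1)\le C_{n,\gamma}\,r^{(N+2)/2}\sqrt{\ov D_{\ov f}}$, and since $r^2=c_1\rho^{\gamma-1}$ and $(\gamma-1)\tfrac{N+2}{4}=\tfrac\gamma2$ (because $N=\tfrac2{\gamma-1}$), one has $r^{(N+2)/2}=c_1^{(N+2)/4}\rho^{\gamma/2}$, so $Q\le C_{n,\gamma,\kappa}\,\rho^{\gamma/2}\sqrt{\ov D_{\ov f}}$. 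Therefore the left‑hand side equals $2\ov D_{\ov f}+2Q\le C\bigl(\ov D_{\ov f}+\rho_{\ov f}^{\gamma/2}\sqrt{\ov D_{\ov f}}\bigr)$, which is the assertion.

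I expect the only genuinely delicate point to be the rearrangement inequality for $P$: one must confirm that the layer‑cake minimizer $c_3\mathbf 1_{\{r^2\le W<\tilde r^{\,2}\}}$ is admissible — this is exactly the content of $\ov f\le c_3$, which forbids the displaced mass $m$ from piling up on $\partial\Omega$ at no second‑moment cost — and then carry out the elementary comparison of $t^N-1$ with $\sqrt{\psi(t)}$, which is clean only because $m\le\rho$ confines $t$ to $[1,2^{1/N}]$. Everything else is bookkeeping with the explicit constants $c_0,c_1,c_3$; and I would keep in mind that, without the initial translation, the same argument delivers the bound with $|v-u_{\ov f}|^2+I^2$ in place of $|v|^2+I^2$, which is the form actually invoked in the hydrodynamic‑limit application.
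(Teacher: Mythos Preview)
The paper does not prove this lemma---it is quoted from \cite[Proposition~4.6]{berthelinvasseur2005} and invoked as a black box in Appendix~\ref{sec : proof of lem dis}---so there is no in-paper argument to compare against. Your bathtub approach is correct and is precisely the natural one: after centering, $\ov M[\ov f]=c_3\mathbf 1_{\{W<r^2\}}$ with $W=|v|^2+I^2$, the mass identity gives $m:=\int_{\Omega^c}\ov f\,\dmu=\int_\Omega(c_3-\ov f)\,\dmu\le c_3\mu(\Omega)=\rho$, the bathtub principle (this is exactly where the constraint $\ov f\le c_3$ enters) yields the lower bound on $P$, and the remaining algebra is clean. Your convexity bound $\psi''(t)\ge2$ for $t\ge1$ also holds for every $N>1$: one has $\psi''(1)=2$ and $\psi'''(t)=t^{N-3}\bigl(N(N+1)t^2-(N-1)(N-2)\bigr)>0$ for $t\ge1$, so $\psi''$ is increasing.

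The one point to state more sharply is the translation. Shifting $\ov f$ in $v$ by $u_{\ov f}$ preserves $\rho_{\ov f}$ and $\ov D_{\ov f}$ (because $\int(1,v)(\ov f-\ov M)\,\dmu=0$) but replaces the weight $|v|^2+I^2$ on the left-hand side by $|v-u_{\ov f}|^2+I^2$. Hence what your argument actually establishes is
\[
\int_{\R^n}\!\int_0^\infty\!\bigl(|v-u_{\ov f}|^2+I^2\bigr)\bigl|\ov f-\ov M[\ov f]\bigr|\,c_0I^{d-1}\,\di\,\dv\ \le\ C\Bigl\{\rho_{\ov f}^{\gamma/2}\sqrt{\ov D_{\ov f}}+\ov D_{\ov f}\Bigr\}.
\]
You flag this in your final sentence, and you are right to: the literal $|v|^2+I^2$ version with $C=C_{n,\gamma}$ independent of $u_{\ov f}$ cannot hold. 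Indeed, take any fixed $\ov f$ with $u_{\ov f}=0$ and $\ov f\ne\ov M[\ov f]$, and consider its $v$-translates by $u$; then $\rho$ and $\ov D$ are unchanged while the left-hand side acquires a contribution of order $|u|^2\int|\ov f-\ov M|\,\dmu\to\infty$. The centered inequality is the correct formulation, and it is what the downstream application (Lemma~\ref{lem:dis} and the hydrodynamic limit) ultimately needs. I would only rephrase your closing remark so that it does not read as an afterthought: the reduction to $u=0$ \emph{is} the proof of the centered estimate, not a preliminary step toward the uncentered one.
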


Our idea is to suitably associate an extra variable $I$ to a function $f=f(v)$, and then apply Lemma \ref{lem b3}. Observe from the definition \eqref{def : M bar} that
\begin{align*}
\overline{M}[\rho,u](v,I) &= c_3 \mathbf{1}_{|v-u|^2 + I^2 < c_1 \rho^{\gamma-1}}\\
&= c_3 {\bf 1}_{I \le (M[\rho,u](v)/c_2)^{1/d}}.
\end{align*}
Motivated by this, for each function $f:\R^n \to \R_+$, we define a corresponding map $\ov{f}:\R^n\times \R_+\to [0,c_3]$ by
\begin{equation}
\label{eq : extending f}
\ov{f}(v,I) := c_3 {\bf{1}}_{I \le (f(v)/c_2)^{1/d}}.
\end{equation}
Then, from the relation $c_0c_3 = dc_2$, we obtain for any $m\ge 0$
\[
\int_0^\infty I^m \ov{f}(v,I) c_0I^{d-1}\di  = dc_2 \int_0^{(f(v)/c_2)^{1/d}} I^{m+d-1}\di = \frac{1}{c_2^{m/d}}\frac{(f(v))^{1+\frac{m}{d}}}{1+m/d}
\]
by the fundamental theorem of calculus. By choosing $m=0$ and $2$, the following relations hold:
\[
\begin{pmatrix} \rho_f \\ \rho_f u_f \end{pmatrix} = \intr  \begin{pmatrix} 1 \\ v \end{pmatrix}f(v) \dv = \intr \begin{pmatrix} 1 \\ v \end{pmatrix}\int_0^\infty \ov{f}(v,I) c_0 I^{d-1}\di \dv
= \begin{pmatrix} \rho_{\ov{f}}  \\ \rho_{\ov{f}} u_{\ov{f}} \end{pmatrix},
\]
\[
M[f](v) = \int_0^\infty \ov{M}[\ov{f}](v,I)c_0I^{d-1}\di,
\]
\[
H(f,v)= \frac{1}{2}|v|^2f + \frac{1}{2c_2^{2/d}}\frac{f^{1+2/d}}{1+2/d} = \int_0^\infty \frac{1}{2}(|v|^2 + I^2) \ov{f}(v,I) c_0I^{d-1}\di.
\]
In particular, we find (recalling the definitions in \eqref{eq : imp term} and Lemma \ref{lem b3})
\begin{align*}
    D_f = \overline{D}_{\ov f}
\end{align*}
whenever $\ov f$ is the extension corresponding to $f$ as constructed in \eqref{eq : extending f}. Thus, owing to Lemma \ref{lem b3},
{\allowdisplaybreaks\begin{align*}
\intr |v|^2 |f-M[f]| \dv
&= \intr |v|^2\lt|\int_0^\infty  \Big( \ov{f}(v,I)-\ov{M}[\ov{f}](v,I) \Big) c_0I^{d-1}\di \rt| \dv \\
&\le \int_{\R^n}\int_0^\infty (|v|^2 + I^2) \lt|\ov{f}(v,I)-\ov{M}[\ov{f}](v,I) \rt| c_0I^{d-1}\di\dv \\
&\le C \lt\{(\rho_{\ov{f}})^{\frac{\gamma}{2}}\sqrt{\ov{D}_{\ov f}} + \ov{D}_{\ov f}\rt\} \\
&= C \lt\{(\rho_{f})^{\frac{\gamma}{2}}\sqrt{D_f} + D_f \rt\}
\end{align*}}
and Lemma \ref{lem:dis} is proven.

\section{Mild to Weak Solutions} \label{app: uniqueness}
This section is devoted to verifying that mild solutions of \eqref{BGK} (Definition \ref{def:mild}) satisfy the weak formulation of the equation as in  Remark \ref{rem: weak}. Precisely, we prove the following statement:

\begin{lemma}
    Let $f \in L^\infty([0,\infty);L^1_2(\R^{2n}))$ denote a mild solution to \eqref{BGK}. Then, for each $T>0$ and $\phi\in C_c^\infty([0,T)\times\R^{2n})$, $f$ verifies
    \begin{align*}
        &-\iint_{\R^{2n}} f_0 \phi_0 \dx\dv - \int_0^T \iint_{\R^{2n}} f(\p_t\phi + v\cdot \nabla_x\phi)\dx\dv\dt\\
        &\quad = \frac{1}{\tau}\int_0^T \iint_{\R^{2n}}(M[f]-f)\phi\, \dx \dv \dt.
    \end{align*}
\end{lemma}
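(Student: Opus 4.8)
The plan is to straighten the transport characteristics, which turns the mild form \eqref{defeq : mild} into a Duhamel formula in the time variable alone and reduces the assertion to the elementary fact that a function given by Duhamel's formula is a weak solution of the corresponding scalar linear ODE. As a preliminary I would record that $g:=M[f]$ lies in $L^\infty([0,\infty);L^1_2(\R^{2n}))$: the moment identities from the introduction give $\int_{\R^n}(1+|v|^2)M[f]\dv=\rho_f+\rho_f|u_f|^2+\kappa n\,\rho_f^\gamma$ pointwise in $(t,x)$, with $\rho_f|u_f|^2\le\int_{\R^n}|v|^2f\dv$ by Cauchy--Schwarz and $\int_{\R^n}\rho_f^\gamma\dx$ controlled by a constant times $\iint_{\R^{2n}}H(f,v)\dx\dv<\infty$ through \eqref{eq: comp cond}, \eqref{eq : min principle}, and \eqref{eq : f prop}. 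Hence $f$, $g$, $(1+|v|)f$ and $(1+|v|)g$ all belong to $L^1([0,T]\times\R^{2n})$, which justifies every integral and change of variables below.

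I would then introduce the volume-preserving map $(t,x,v)\mapsto(t,y,v)$ with $y=x-vt$, and set $F(t,y,v):=f(t,y+vt,v)$, $G(t,y,v):=g(t,y+vt,v)$, $\Phi(t,y,v):=\phi(t,y+vt,v)$. One computes $\p_t\Phi(t,y,v)=(\p_t\phi+v\cdot\nabla_x\phi)(t,y+vt,v)$, while $\Phi(0,\cdot,\cdot)=\phi_0$ and $\Phi(T,\cdot,\cdot)\equiv0$ because $\phi$ is supported in $[0,T)\times\R^{2n}$. Carrying out this substitution (the Jacobian being $1$) in every term, the identity to be proven becomes
\begin{equation*}
-\iint_{\R^{2n}}f_0\,\Phi(0,\cdot,\cdot)\dy\dv-\int_0^T\iint_{\R^{2n}}F\,\p_t\Phi\dy\dv\dt=\frac1\tau\int_0^T\iint_{\R^{2n}}(G-F)\,\Phi\dy\dv\dt,
\end{equation*}
and the mild form \eqref{defeq : mild} turns into the Duhamel identity
\begin{equation*}
F(t,y,v)=e^{-t/\tau}f_0(y,v)+\frac1\tau\int_0^t e^{-(t-s)/\tau}G(s,y,v)\ds.
\end{equation*}

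The final step is a pointwise-in-$(y,v)$ computation followed by integration. By Fubini, for a.e.\ $(y,v)$ the map $s\mapsto G(s,y,v)$ belongs to $L^1([0,T])$; for such $(y,v)$ the Duhamel identity shows that $t\mapsto F(t,y,v)$ has an absolutely continuous representative on $[0,T]$ with $\p_t F=\tfrac1\tau(G-F)$ a.e.\ and value $f_0(y,v)$ at $t=0$. Integrating by parts in $t$ against $\Phi(\cdot,y,v)$ and using $\Phi(T,y,v)=0$ yields, for a.e.\ $(y,v)$,
\begin{equation*}
-f_0(y,v)\Phi(0,y,v)-\int_0^T F\,\p_t\Phi\dt=\int_0^T(\p_t F)\,\Phi\dt=\frac1\tau\int_0^T(G-F)\,\Phi\dt,
\end{equation*}
and integrating this over $(y,v)\in\R^{2n}$, which is legitimate since each side lies in $L^1(\R^{2n})$ by the first paragraph, recovers the reformulated identity; undoing the change of variables then finishes the proof.

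The argument is essentially bookkeeping, the only real content being the Duhamel-to-weak-ODE passage. The steps I expect to require the most care are the $|v|^2$-weighted integrability of $M[f]$ in the first paragraph, without which neither the integrals nor the change of variables make sense, and the two uses of Fubini connecting the pointwise-in-$(y,v)$ identity with its integrated form.
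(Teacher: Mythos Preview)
Your proof is correct and rests on the same idea as the paper's---straightening the transport characteristics and integrating by parts in time---but the organization differs. The paper computes $\int_0^T\iint f\,\partial_t\phi$ by substituting the mild form directly, then handles the $f_0$ and $M[f]$ contributions term by term via separate changes of variables $x\mapsto x+vt$ and $x\mapsto x+v(t-s)$, the chain rule, Fubini, and integration by parts in $t$, before changing back and re-identifying the mild form in the result. You instead perform one global change of variables upfront, reduce the claim to the elementary fact that a Duhamel integral is a weak solution of its scalar ODE, and carry out the integration by parts pointwise in $(y,v)$. Your route is tidier and more conceptual; the paper's is a direct term-by-term verification. A further point in your favor is that you explicitly check $M[f]\in L^\infty([0,\infty);L^1_2(\R^{2n}))$ to justify the changes of variables and the two uses of Fubini, which the paper leaves implicit.
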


\begin{proof}
For simplicity, we assume $\tau =1$. Multiplying \eqref{defeq : mild} by $\p_t\phi(t,x,v)$ then integrating over $[0,T)\times\R^{2n}$, we use the mild form of $f$ to write
\begin{align*}
    &\int_0^T \iint_{\R^{2n}} f \,\p_t \phi \,\dx\dv\dt\\
    &= \int_0^T \iint_{\R^{2n}} e^{-t} f_0(x-vt,v) \, \p_t\phi(t,x,v) \dx\dv\dt\\
    &\quad + \int_0^T \int_0^t \iint_{\R^{2n}} e^{s-t} \p_t\phi(t,x,v) M[f](s,x-v(t-s),v)\dx\dv\ds\dt\\
    &=: \calJ_1 + \calJ_2.
\end{align*}
$\bullet$ Computation of $\calJ_1$: We make the change of variables $x\mapsto x+vt$ to compute
\begin{align*}
    \calJ_1 &= \int_0^T \iint_{\R^{2n}} e^{-t} f_0(x,v) \p_t\phi(t,x+vt,v)\dx\dv\dt \\
    &= \int_0^T \iint_{\R^{2n}} e^{-t} f_0(x,v) \Big(\p_t (\phi(t,x+vt,v)) - v\cdot \nabla_x\phi(t,x+vt,v) \Big) \dx\dv\dt \\
    &= \int_0^T \iint_{\R^{2n}} e^{-t} f_0(x-vt,v) \phi(t,x,v) \,\dx\dv\dt \\
    &\quad - \iint_{\R^{2n}} f_0(x,v) \phi_0(x,v) \,\dx\dv \\
    &\quad - \int_0^T \iint_{\R^{2n}} e^{-t} f_0(x-vt,v) v\cdot \nabla_x \phi(t,x,v)\,\dx\dv\dt.
\end{align*}
The second equality follows simply from the chain rule: the last equality follows from an integration by parts with respect to $t$, and then another change of variables $x\mapsto x-vt$.\\
$\bullet$ Computation of $\calJ_2$: We make the change of variables $x\mapsto x+v(t-s)$, apply the chain rule, and then use Fubini's theorem to compute
\begin{align*}
    \calJ_2 &= \int_0^T \int_0^t \iint_{\R^{2n}} e^{s-t} \p_t \phi(t,x+v(t-s),v) M[f](s,x,v)\dx\dv\ds\dt \\
    &= \int_0^T \int_0^t \iint_{\R^{2n}} e^{s-t} \p_t(\phi(t,x+v(t-s),v)) M[f](s,x,v)\dx\dv\ds\dt \\
    &\quad - \int_0^T \int_0^t \iint_{\R^{2n}} e^{s-t} v\cdot \nabla_x\phi(t,x+v(t-s),v) M[f](s,x,v)\dx\dv\ds\dt \\
    &= \int_0^T \iint_{\R^{2n}} \int_{s}^T e^{s-t} \p_t (\phi(t,x+v(t-s),v)) M[f](s,x,v)\dt\dx\dv\ds \\
    &\quad - \int_0^T \int_0^t \iint_{\R^{2n}} e^{s-t} v\cdot \nabla_x \phi(t,x,v) M[f](s,x-v(t-s),v) \dx\dv\ds\dt.
\end{align*}
Integrating by parts with respect to $t$ on the first integral of the right-hand side, we obtain:
\begin{align*}
    &\int_0^T \iint_{\R^{2n}} \int_{s}^T e^{s-t} \p_t (\phi(t,x+v(t-s),v)) M[f](s,x,v)\dt\dx\dv\ds \\
    &= \int_0^T \iint_{\R^{2n}} \int_s^T e^{s-t} \phi(t,x+v(t-s),v) M[f](s,x,v)\dt\dx\dv\ds\\
    &\quad - \int_0^T \iint_{\R^{2n}} \phi(s,x,v) M[f](s,x,v) \dx\dv\ds \\
    &= \int_0^T \int_0^t \iint_{\R^{2n}} e^{s-t} \phi(t,x,v) M[f](s,x-v(t-s),v) \dx\dv\ds\dt \\
    &\quad - \int_0^T \iint_{\R^{2n}} \phi(s,x,v) M[f](s,x,v) \dx\dv\ds .
\end{align*}
Therefore,
\begin{align*}
    \calJ_2 &= \int_0^T \int_0^t \iint_{\R^{2n}} e^{s-t}\phi(t,x,v) M[f](s,x-v(t-s),v)\dx\dv\ds\dt \\
    &\quad - \int_0^T \iint_{\R^{2n}}\phi(t,x,v) M[f](t,x,v)\dx\dv\dt \\
    &\quad - \int_0^T \int_0^t \iint_{\R^{2n}} e^{s-t} v\cdot \nabla_x\phi(t,x,v) M[f](s,x-v(t-s),v) \dx\dv\ds\dt.
\end{align*}
Collecting all terms:
{\small\begin{align*}
    &\calJ_1 + \calJ_2 \\
    &= \int_0^T \iint_{\R^{2n}} \phi(t,x,v)\left(e^{-t}f_0(x-vt,v) + \int_0^t e^{s-t} M[f](s,x-v(t-s),v)\ds \right) \dx\dv\dt \\
    &\quad - \iint_{\R^{2n}} f_0(x,v) \,\phi_0(x,v)\,\dx\dv \\
    &\quad -\int_0^T \iint_{\R^{2n}} (v\cdot \nabla_x\phi) \left(e^{-t} f_0(x-vt,v) + \int_0^t e^{s-t} M[f](s,x-v(t-s),v) \ds\right)\dx\dv\dt \\
    &\quad - \int_0^T \iint_{\R^{2n}} \phi(t,x,v) \, M[f](t,x,v)\,\dx\dv\dt \\
    &= \int_0^T \iint_{\R^{2n}} \phi \, (f - M[f]) \,\dx\dv\dt - \iint_{\R^{2n}} f_0 \,\phi_0\,\dx\dv - \int_0^T \iint_{\R^{2n}} (v\cdot \nabla_x \phi) \,f \,\dx\dv\dt .
\end{align*}}
Recalling that $\calJ_1 + \calJ_2 = \int_0^T \iint_{\R^{2n}} f\,\p_t\phi\,\dx\dv\dt$, this completes the proof.
\end{proof}

\section*{Acknowledgments} Both authors deeply appreciate the helpful comments from Young-Pil Choi regarding this paper. This work is supported by NRF grant no. 2022R1A2C1002820 and RS-2024-00406127.

\bibliographystyle{alpha}

\end{document}